\documentclass[psamsfonts,12pt]{amsart}

\usepackage[margin=1in]{geometry}  
\usepackage{graphicx}              
\usepackage{amsmath}               
\usepackage{amsfonts}              
\usepackage{amsthm}                
\usepackage{verbatim}              
\usepackage{indentfirst}           
\usepackage{underscore}
\usepackage{enumitem}
\usepackage{mathtools} 
\usepackage{amssymb}
\usepackage[all]{xy}



\newtheorem{thm}{Theorem}[section]
\newtheorem{lem}[thm]{Lemma}
\newtheorem{prop}[thm]{Proposition}
\newtheorem{cor}[thm]{Corollary}
\newtheorem{conj}[thm]{Conjecture}

\theoremstyle{remark}
       \newtheorem*{rmk}{Remark}
\theoremstyle{remark}

\newcommand{\Mod}[1]{\ (\textup{mod}\ #1)}     
\newcommand{\QQ}{\mathbb{Q}}
\newcommand{\Qbar}{\overline{\QQ}}
\newcommand{\Pone}{\mathbb{P}^1} 
\newcommand{\Gal}{Gal}

 \newcommand{\Orb}{\operatorname{Orb}}
 \newcommand{\N}{\operatorname{N}} 
 \newcommand{\rank}{\operatorname{rank}} 
 \newcommand{\ord}{\operatorname{ord}}  
\usepackage{amssymb,fge}
\newcommand{\mysetminus}{\mathbin{\fgebackslash}}

\title{Galois groups of iterates of some unicritical polynomials}
\date{\today}
\author{Michael R. Bush} 
\address{Department of Mathematics, Washington \& Lee University; Lexington, VA 24450, USA}
\email{bushm@wlu.edu}
\author[Wade Hindes]{Wade Hindes}
\address{Department of Mathematics, The Graduate Center, City University of New York (CUNY); 365 Fifth Avenue, New York, NY 10016, USA}
\email{whindes@gc.cuny.edu}
\author[Nicole R. Looper]{Nicole R. Looper}
\address{Department of Mathematics, Northwestern University; 2033 Sheridan Road, Evanston, IL 60208, USA}
\email{nlooper@math.northwestern.edu}
\keywords{Galois theory, arithmetic dynamics, rational points on curves.}
\begin{document}
\maketitle
\begin{abstract} \normalsize We prove that the arboreal Galois representations attached to certain unicritical polynomials have finite index in an infinite wreath product of cyclic groups, and we prove surjectivity for some small degree examples, including a new family of quadratic polynomials. To do this, we use a combination of local techniques including the Chabauty-Coleman method and the Mordell-Weil sieve. 
\end{abstract}
\renewcommand{\thefootnote}{}
\footnote{2010 \emph{Mathematics Subject Classification}: Primary: 11R32, 37P15. Secondary: 14G05.}


\section{Introduction} 
Let $K$ be a number field. For a polynomial $\varphi(x)\in K[x]$, let $\varphi^n$ denote the $n$th iterate of $\varphi$, let $K_n(\varphi)$ be the splitting field of $\varphi^n$ over $K$ and let $G_{K,n}(\varphi):=Gal(K_n(\varphi)/K)$.  The groups $G_{K,n}(\varphi)$ form a projective system under the natural surjections $G_{K,n}(\varphi)\to G_{K,n-1}(\varphi)$, so that we may form the inverse limit 
\[G_{K}(\varphi):=\varprojlim_{n} G_{K,n}(\varphi).\]  
Much work has been done concerning the structure and size of $G_{K}(\varphi)$ in the case of quadratic polynomials~\cite{Wade, Jones1, Stoll}. For example, if $\varphi(x)\in\mathbb{Z}[x]$ is quadratic and critically infinite, then one expects that $G_{K}(\varphi)\leq [C_2]^{\infty}$ is a finite index subgroup~\cite[\S3]{Jones1}; here $C_d$ is a cyclic permutation group generated by a $d$-cycle and $[C_d]^\infty$ denotes the infinite iterated wreath product of $C_d$. Moreover, such a statement is known assuming the \emph{abc}-conjecture and an irreducibility condition~\cite[Prop. 6.1]{Khoa}. However, unconditional results are scarce~\cite{Jones-Manes,Stoll}, and up to this point, there are no examples in higher degree. In this article, we generalize a technique of Jones~\cite[Theorem 1.2]{Jones1} to produce polynomials $\varphi_p$ of prime degree $p\ge 3$ defined over $\QQ(\zeta_p)$ for which $G_{\mathbb{Q}(\zeta_p)}(\varphi_p)$ has finite index in $[C_p]^{\infty}$: 
\begin{thm}{\label{thm:unicrit}} Let $p$ be an odd prime, let $\zeta_p$ be a primitive $p$th root of unity, and let 
\[\varphi_p(x)=(x-1)^p+(2-\zeta_p).\]
Then there exists an explicit constant $C(p)$, depending only on $p$, such that    
\[\Big[[C_p]^\infty:G_{\mathbb{Q}(\zeta_p)}(\varphi_p)\Big]\leq C(p).\] 
Moreover, $G_{\mathbb{Q}(\zeta_p)}(\varphi_p)\cong [C_p]^\infty$ for $p=3,5, 7$.    
\end{thm}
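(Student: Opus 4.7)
The plan is to adapt Jones's approach for quadratic polynomials \cite[Theorem 1.2]{Jones1} to the degree-$p$ unicritical setting, exploiting the fact that $K := \mathbb{Q}(\zeta_p)$ contains the $p$-th roots of unity so Kummer theory applies cleanly at every level of the tree. Set $c_n := \varphi_p^n(1)$, the critical value orbit. The fibre of $\varphi_p$ over a root $\alpha$ of $\varphi_p^{n-1}$ is $\{1+\zeta_p^j\sqrt[p]{\alpha-c_1} : 0\le j\le p-1\}$, so $K_n(\varphi_p)/K_{n-1}(\varphi_p)$ is generated by the $p$-th roots of the radicands $\alpha-c_1$. The resultant identity $\prod_\alpha(c_1-\alpha)=\varphi_p^{n-1}(c_1)=c_n$, combined with the transitivity of the Galois action on the $p^{n-1}$ radicands, collapses all Galois-stable subset-product relations to the single one they span, yielding the criterion
$$G_{K,n}(\varphi_p)\cong[C_p]^n \iff c_k\notin\bigl(K_{k-1}(\varphi_p)^\times\bigr)^p\text{ for all }1\le k\le n,$$
with the index $[[C_p]^n:G_{K,n}(\varphi_p)]$ bounded in terms of the number and depth of the failures.

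\textbf{$\mathfrak{p}$-adic control for large $k$.} Let $\mathfrak{p}=(1-\zeta_p)$ be the prime of $K$ above $p$, so that $c_1-1=1-\zeta_p$ is a uniformizer. The recursion $c_{k+1}=(c_k-1)^p+c_1$ then lets me compute the $\mathfrak{p}$-adic expansion of $c_k$ inductively. My plan is to define a $\mathfrak{p}$-adic invariant of $c_k$ (essentially the leading term of a $p$-adic log-type expansion, designed to be compatible with the wreath-product recursion) whose non-vanishing modulo $p$ prevents $c_k$ from being a $p$-th power in any extension of $K$ with suitably bounded ramification at $\mathfrak{p}$. Since the ramification of $K_{k-1}(\varphi_p)$ at $\mathfrak{p}$ can be bounded effectively by tracking the local behaviour of the tower, this will yield an explicit $C(p)$ controlling the level at which failures can occur, and hence the finite-index claim.

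\textbf{Surjectivity for $p\in\{3,5,7\}$ and the main obstacle.} To upgrade finite index to equality for these primes, we verify the Kummer criterion at each of the finitely many levels $k$ not handled in the previous step. Each such case translates the question ``is $c_k$ a $p$-th power in $K_{k-1}(\varphi_p)$?'' into the existence of a $K$-rational point on an explicit cyclic cover $C_{p,k}$ (roughly, a curve of the form $y^p=\text{(polynomial in roots of }\varphi_p^{k-1}\text{)}$) beyond the expected points. For $p=3,5,7$ the covers involved have tractable genus, and one can hope to determine all $K$-rational points by applying the Chabauty-Coleman method (in the favourable range $\mathrm{rank}\,\mathrm{Jac}(C_{p,k})(K)<g(C_{p,k})$) together with the Mordell-Weil sieve, as announced in the abstract. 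The hard part will be twofold: first, making the $\mathfrak{p}$-adic analysis in the previous step sharp enough that the remaining levels actually fall within the computational reach of Chabauty-Coleman over the cyclotomic base $\mathbb{Q}(\zeta_p)$; and second, carrying out the Mordell-Weil rank and sieve computations on the Jacobians of the $C_{p,k}$, which for $p=7$ sits at the edge of feasibility and so likely explains the $p\le 7$ cutoff in the surjectivity statement.
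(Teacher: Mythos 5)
Your opening Kummer criterion is essentially Lemma~\ref{maximality} of the paper: maximality of each level $H_n$ is equivalent to $c_n=\varphi_p^n(1)$ not being a $p$th power in $K_{n-1}$, and this works at $n=1$ too because $K=\QQ(\zeta_p)$ contains $\mu_p$. That part is fine (the irreducibility hypothesis in Lemma~\ref{maximality} is supplied by $\varphi_p^m$ being Eisenstein at $(1-\zeta_p)$). The two subsequent steps, however, diverge from what the paper actually does and both have gaps.

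\textbf{The $\mathfrak{p}$-adic control step is pointed at the wrong prime.} You propose a $\mathfrak{p}$-adic invariant at $\mathfrak{p}=(1-\zeta_p)$ that prevents $c_k$ from being a $p$th power. But $c_k\equiv 1\pmod{\mathfrak{p}}$ for every $k$, so $c_k$ is always a $\mathfrak{p}$-adic unit, and moreover every iterate $\varphi_p^m$ is Eisenstein at $\mathfrak{p}$, so the ramification of $K_{n-1}/K$ above $\mathfrak{p}$ grows without bound. In a tower with unbounded wild ramification at $\mathfrak{p}$, congruence conditions on units modulo powers of $\mathfrak{p}$ become weaker, not stronger, so it is not at all clear that a fixed ``leading term of a $\mathfrak{p}$-adic log'' gives an obstruction that survives into $K_{n-1}$. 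In fact the paper deliberately excludes $\mathfrak{p}$ from the analysis: it uses the strict preperiodicity $\varphi_p(0)=\varphi_p(1-\zeta_p)=1-\zeta_p$ to show (via Lemma~\ref{lemma:0orbit}) that the $c_n$ are pairwise coprime \emph{away from} $(1-\zeta_p)$, then a discriminant/ramification argument to upgrade ``not a $p$th power in $K_{n-1}$'' to ``$v_{\mathfrak{q}}(c_n)\not\equiv 0\pmod p$ for some prime $\mathfrak{q}\notin S$,'' and finally an $S$-unit decomposition $c_n=u\,y_n^p$ feeding the $S$-integral point $(\varphi_p^{n-1}(1),y_n)$ into the superelliptic curve $uY^p=(X-1)^p+2-\zeta_p$. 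The explicit bound $C(p)$ comes from the effective height bound of B\'erczes--Evertse--Gy\"ory for such curves, compared against the exponential growth $p^{n-1}\hat{h}_{\delta_p}(0)$ of the canonical height along the orbit, with a lower bound for $\hat{h}_{\delta_p}(0)$ supplied by Lemma~\ref{lem:lbd}. Nothing in this chain is local at $\mathfrak{p}$.

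\textbf{Chabauty--Coleman plays no role in the $p=3,5,7$ surjectivity.} You have attached the wrong tool to this theorem. Chabauty--Coleman and the Mordell--Weil sieve appear in the paper only in the proof of Theorem~\ref{thm:quad} (the quadratic family over $\QQ$, where the relevant curves $\mathcal{C}_1,\mathcal{C}_2$ are an elliptic and a genus-3 hyperelliptic curve over $\QQ$). For Theorem~\ref{thm:unicrit} at $p=3,5,7$ the argument is elementary: $\QQ(\zeta_p)$ has class number one, so the $S$-unit decomposition simplifies to $c_n=\zeta_p^{n_0}u_1^{n_1}\cdots u_t^{n_t}y_n^p$ over $\ZZ[\zeta_p]^\times$; one checks by a direct norm computation that $\N_{\QQ(\zeta_p)/\QQ}(c_n)$ is not a $p$th power for $1\le n\le 7$; and for $n\ge 8$ one observes that the critical orbit $c_n$ becomes eventually constant modulo each of a handful of small primes $\mathfrak{q}=(2\pm\zeta_p),(3\pm\zeta_p),\dots$, so finitely many congruences modulo these $\mathfrak{q}$ rule out every tuple $(n_0,\dots,n_t)$ simultaneously for all $n\ge 8$. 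Running Chabauty over $\QQ(\zeta_7)$ (a degree-6 field) on a genus-$\binom{p-1}{2}$-ish cyclic cover would be dramatically harder than what the paper actually does. If you want to rescue your plan for the surjectivity step, the right move is to replace it with the norm check plus the fixed-point-mod-$\mathfrak{q}$ sieve; and for the finite-index step, to replace the $\mathfrak{p}$-adic idea with the coprimality/$S$-unit/effective-Siegel argument.
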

In addition, we use the Chabauty-Coleman method~\cite{Poonen} in combination with the Mordell-Weil sieve~\cite{MW-Sieve} to produce the following family of quadratic polynomials with surjective Galois representations:   
\begin{thm}{\label{thm:quad}} Let $p\geq3$ be a prime and let 
\[\phi_p(x)=(x-p)^2+2p-p^2.\] 
Then $G_{\QQ}(\phi_p)\cong [C_2]^\infty$ in any of the following cases: 
\begin{enumerate}[topsep=5pt, partopsep=5pt, itemsep=2pt]
\item[\textup{(1)}] $p\equiv 2\Mod{3}$, 
\item[\textup{(2)}] $p\equiv 3\Mod{4}$, 
\item[\textup{(3)}] $p\equiv2\Mod{5}$,  
\item[\textup{(4)}] $p\equiv 3,6 \Mod{7}$,
\item[\textup{(5)}] $p\equiv 2,3,5,7,10 \Mod{11}$, 
\item[\textup{(6)}] $p\equiv 2,3,7,9,11 \Mod{13}$. 
\end{enumerate} 
Moreover, $G_{\mathbb{Q}}(\phi_p)\cong [C_2]^\infty$ for all primes $p< 5000$.    
\end{thm}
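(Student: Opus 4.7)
The plan is to apply the standard non-squareness criterion for arboreal surjectivity of a quadratic polynomial (cf.\ Stoll, Jones): conjugating by $y = x - p$ carries $\phi_p$ to $\psi(y) = y^2 - p(p-1)$, which has critical point $0$, and setting $c_n := \psi^n(0)$, one has $G_\QQ(\phi_p) \cong [C_2]^\infty$ if and only if, for every $n \geq 1$, $c_n$ is a non-square in the splitting field $K_{n-1}$ of $\psi^{n-1}$ over $\QQ$. Since at each level the quadratic subextensions of $K_{n-1}/\QQ$ are generated (up to signs) by the square classes of the previous orbit values, this is equivalent to asking that for every $n$ and every subset $S \subseteq \{1, \ldots, n-1\}$, the product $c_n \cdot \prod_{i \in S} c_i$ (times an appropriate sign) is not a square in $\QQ$.

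A short induction using the recursion $c_{n+1} = c_n^2 - p(p-1)$ yields $v_p(c_n) = 1$ for every $n \geq 1$. Consequently $v_p$ of $c_n \cdot \prod_{i \in S} c_i$ equals $|S| + 1$, which is odd whenever $|S|$ is even; such products are therefore automatically non-squares. The genuinely difficult cases are those with $|S|$ odd, in which the product becomes an explicit polynomial identity $P_{n, S}(p) = y^2$ with $P_{n, S} \in \ZZ[p]$, and the goal reduces to showing that this equation has no rational solution with $p$ an odd prime. Several are eliminated by elementary sign considerations or by exhibiting a primitive prime divisor of $c_n$ appearing with odd valuation; what remains is a finite family of hyperelliptic curves $C_{n, S}/\QQ$.

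For each of the six conditions (1)--(6) I would pick the auxiliary modulus $q \in \{3, 4, 5, 7, 11, 13\}$ naturally associated to that condition and verify by a finite-field computation that $P_{n, S}(p) \bmod q$ is a quadratic non-residue for every remaining $(n, S)$ on the prescribed residue classes of $p$; this disposes of all primes in those classes. For primes $p$ not covered by any of the six classes, I would run the Chabauty-Coleman method on each $C_{n, S}$---after computing or rigorously bounding the Mordell-Weil rank of $J(C_{n, S})(\QQ)$ and verifying that it is strictly less than the genus---and then invoke the Mordell-Weil sieve to narrow down the candidate $p$-coordinates to an explicit finite set. The principal obstacle is the control of these Mordell-Weil groups and the implementation of the sieve in sufficient depth; once these computations are in hand, the theorem follows, and the claim for $p < 5000$ reduces to iterating $\psi$ for each such prime and checking the non-squareness conditions directly up to the depth at which the criterion stabilizes.
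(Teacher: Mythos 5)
There is a genuine gap, and it starts at the very first step. You conjugate $\phi_p$ by $L(x)=x-p$ to obtain $\psi(y)=y^2-p(p-1)$ and then take $c_n=\psi^n(0)$, but this changes the Galois tower being studied. The paper's $K_n(\phi_p)$ is the splitting field of $\phi_p^n$, i.e.\ the field generated by $\phi_p^{-n}(0)$, while $K_n(\psi)$ is generated by $\psi^{-n}(0)=\{\alpha-p:\alpha\in\phi_p^{-n}(p)\}$. These are the trees over $0$ and over $p$ respectively, and they give genuinely different fields: for $p=3$ one has $K_1(\phi_3)=\QQ(\sqrt{3})$ whereas $K_1(\psi)=\QQ(\sqrt{6})$. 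Correspondingly $c_n=\psi^n(0)=\phi_p^n(p)-p$ is not the critical-orbit sequence that controls $G_{\QQ}(\phi_p)$; the criterion (Lemma~\ref{maximality}, or Jones' Theorem 3.3) must be applied with $\phi_p^n(p)$, not with the shifted sequence $c_n$. Your $v_p(c_n)=1$ computation happens to transfer to $v_p(\phi_p^n(p))=1$, but the curves $P_{n,S}(p)=y^2$ you would write down are the wrong ones.

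Even setting that aside, the reduction ``what remains is a finite family of hyperelliptic curves'' is not justified, and this is where the paper's central idea lives. The subsets $S$ you allow grow with $n$, so a priori there are infinitely many square-product conditions. The paper avoids this entirely by observing that $\phi_p(0)=\phi_p^2(0)=2p$, so the forward orbit of $0$ under $\phi_p$ is strictly preperiodic and supported only on $\{2,p\}$. Via the identity $\phi_p^n(p)\equiv\phi_p^{n-n_i}(0)\pmod{q_i}$ for any prime $q_i$ dividing both $\phi_p^n(p)$ and some earlier $\phi_p^{n_i}(p)$, this forces the squarefree part $d_n$ of $\phi_p^n(p)$ to divide $2p$, and a short parity/sign argument then pins it down to $\phi_p^n(p)=p\cdot y_n^2$ as the only possible failure, for every $n\geq2$. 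Only then does one get the two curves $\mathcal{C}_1$, $\mathcal{C}_2$ for $n=2,3$ (these are attacked with Chabauty--Coleman and the Mordell--Weil sieve, as you anticipate), plus a uniform-in-$n$ congruence argument for $n\geq4$. That last step also needs a structural fact you do not mention: for each small modulus $q$ in cases (1)--(6), the orbit $\phi_p^n(p)\bmod q$ eventually lands on a fixed value, so a single quadratic-nonresidue check modulo $q$ disposes of all $n\geq4$ at once. Without the orbit-of-$0$ observation and the eventual-fixed-point observation, neither the finiteness of the curve family nor the verification ``for every remaining $(n,S)$'' can be carried out.
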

To prove Theorem~\ref{thm:quad}, we first show that $Gal_\QQ(\phi_p^3)\cong [C_2]^3$ for all primes $p\geq3$ in Lemma~\ref{lemma:smalln}. In particular, Theorem~\ref{thm:quad} and Lemma~\ref{lemma:smalln} provide evidence for the following conjecture:  
\begin{conj}Let $p\geq3$ be a prime and let \[\phi_p(x)=(x-p)^2+2p-p^2.\]
Then $G_{\QQ}(\phi_p)\cong [C_2]^\infty.$   
\end{conj}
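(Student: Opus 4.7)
The plan is to reduce the conjecture, via the standard maximality criterion for quadratic polynomials (Stoll, Jones), to an infinite list of non-squareness statements along the critical orbit, and then attack those statements with a uniform prime-divisor argument. Concretely, write $\gamma=p$ for the critical point of $\phi_p$ and $c_n:=\phi_p^n(p)$, so that $c_1=\phi_p(p)=-p(p-2)$. The criterion says that $G_\QQ(\phi_p)\cong[C_2]^\infty$ if and only if $c_n$ is not a square in the splitting field $K_{n-1}(\phi_p)$ for every $n\ge 1$. By Lemma~\ref{lemma:smalln} this holds for $n\le 3$ unconditionally, so the induction must be run for $n\ge 4$ uniformly in $p$.

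Second, I would try to produce a \emph{primitive prime divisor} of $c_n$ at each level: a rational prime $\ell$ with $v_\ell(c_n)$ odd, $\ell$ not dividing $c_m$ for any $m<n$, and $\ell$ unramified in $K_{n-1}(\phi_p)$. Such an $\ell$ forces $c_n$ out of $K_{n-1}(\phi_p)^{\times 2}$, since a square in $K_{n-1}(\phi_p)$ adjoining $\sqrt{c_n}$ would force ramification at $\ell$ somewhere in the tower, contradicting minimality of $\ell$ in the orbit. The congruence conditions \textup{(1)}--\textup{(6)} of Theorem~\ref{thm:quad} amount to exhibiting, for $p$ in appropriate residue classes modulo the small moduli $\{3,4,5,7,11,13\}$, a small prime $\ell$ that plays the role of a primitive divisor at every level; the conjecture predicts a primitive divisor without any restriction on $p$.

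Third, to extract a primitive divisor unconditionally I would combine three ingredients: (a) the factorization $\phi_p(x)-\phi_p(y)=(x-y)(x+y-2p)$, which controls how valuations propagate along the orbit and shows that any prime that divides a ``gcd collision'' $\gcd(c_n,c_m)$ for $m<n$ must divide one of a small number of explicit algebraic relations among the $c_i$; (b) the canonical-height estimate $h(c_n)=2^n h_{\phi_p}(p)+O(1)$, which prevents $c_n$ from being built entirely out of primes that appeared earlier; and (c) for those primes $p$ missed by the congruence list, a Chabauty--Coleman argument on the curves $C_{p,n}:y^2=c_n(p)$ (either in $p$ for fixed $n$, or in an auxiliary variable for fixed $p$) supplemented by a Mordell-Weil sieve, in direct analogy with the proof of Theorem~\ref{thm:quad}.

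The main obstacle, and the reason this is stated only as a conjecture, is step (c). Infinitely many primes $p$ fall outside the congruence list, and for each such $p$ one in principle has to verify the needed rank bound and sieve input on a different curve $C_{p,n}$ at every level $n$; there is no obvious uniformity in $n$, and the genus of $C_{p,n}$ grows like $2^{n-1}$, making direct Chabauty quickly infeasible. A fully uniform proof would likely require either (i) an \emph{abc}-type input, which, as noted in the introduction, reduces the common-factor control in (a) to a bookkeeping exercise and should yield the conjecture immediately, or (ii) a new dynamical Zsygmondy-type theorem producing primitive prime divisors in arbitrary critical orbits of quadratic polynomials over $\QQ$. Either route would suffice, but both lie beyond what is currently known unconditionally.
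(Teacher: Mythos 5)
The statement you address is a conjecture; the paper offers no proof of it, only the partial evidence of Theorem~\ref{thm:quad} (several infinite congruence families of $p$ together with all $p<5000$) and the unconditional verification through level $n\le 3$ in Lemma~\ref{lemma:smalln}. Your proposal is, correctly, not a proof either but an outline of the strategy and the obstacles to closing it, and in broad strokes it matches the paper's own viewpoint.

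A few remarks on the details. Your ingredient (a) proposes to control ``gcd collisions'' via the factorization $\phi_p(x)-\phi_p(y)=(x-y)(x+y-2p)$, but that is not quite the mechanism the paper uses, and it is weaker than what is actually available. The paper's control is simpler and sharper: if a prime $q$ divides both $c_n$ and some earlier $c_m$ with $m<n$, then $q\mid\phi_p^{n-m}(0)$; since $0$ is strictly preperiodic with $\phi_p(0)=\phi_p^2(0)=2p$, the only possibilities are $q\in\{2,p\}$. Combined with $c_n>0$ and $c_n\equiv 1\pmod 2$ for $n\ge 2$, and Jones's result that $c_n$ is never a rational square, this pins the potential failure of maximality down to the single Diophantine condition $\phi_p^n(p)=p\,y_n^2$ of~(\ref{refinement}), not merely to ``a small number of explicit algebraic relations.'' The congruence hypotheses of Theorem~\ref{thm:quad} then kill that one equation for $n\ge 4$ by choosing a modulus $\ell$ (depending on the residue class of $p$) for which $\phi_p^n(p)\bmod\ell$ is eventually constant at a value outside $p\cdot(\mathbb{F}_\ell^*)^2$. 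Your ingredient (b), the height estimate $h(c_n)=2^n\hat h_{\phi_p}(p)+O(1)$, is true and motivates the expectation of primitive prime divisors, but it is not invoked in the proof of Theorem~\ref{thm:quad}; the argument there is purely local.

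Your diagnosis of why the conjecture remains open is sound. The congruence method covers infinitely many $p$ but not all; the Chabauty--Coleman and Mordell--Weil-sieve input appears only at level $n=3$ (the genus-$3$ curve $\mathcal{C}_2$), and the genus growth you flag makes the analogous computation at higher $n$ rapidly infeasible; and the two escape routes you name---an \emph{abc}-conditional primitive-divisor theorem as in~\cite{Khoa}, or an unconditional dynamical Zsygmondy statement for critical orbits of quadratics---are exactly the missing ingredients a uniform proof would need, neither currently available.
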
 
In addition to explicit techniques in the theory of rational points on curves, we make use of ideas developed in~\cite{Jones,Jones1}, as well as the computer algebra systems {\tt{Magma}}~\cite{Magma} and {\tt{Sage}}~\cite{Sage}. \\[5 pt]
\indent \textbf{Acknowledgements:} This research began at the May 2016 AIM workshop titled ``The Galois theory of orbits in arithmetic dynamics," and we thank AIM and the organizers of this workshop. The second author also thanks Michael Stoll for suggesting the use of the Mordell-Weil sieve to rule out residue classes when determining $\mathcal{C}_2(\mathbb{Q})$ below.


\section{Main arguments}
In order to prove Theorem~\ref{thm:unicrit}, we make use of a slight modification of a lemma found in~\cite{Jones}. For $\phi(x)=(x-\gamma)^d+c\in K[x]$, each $K_n(\phi)$ is obtained from $K_{n-1}(\phi)$ by adjoining the $d$th roots of $\alpha_i-c$ for all roots $\alpha_i$ of $\phi^{n-1}(x)$. Writing $H_n:=Gal(K_n(\phi)/K_{n-1}(\phi))$, we then have an injection \[H_n \hookrightarrow (\mathbb{Z}/d\mathbb{Z})^{m}\] where $m$ is the degree of $\phi^{n-1}(x)$ and $n \geq 2$. This statement also holds when $n = 1$ provided the base field  $K$ contains a $d$th root of unity and we will make this assumption from this point forward. We say that $H_n$ is maximal when the injection is an isomorphism.

\begin{lem}{\label{maximality}} Let $d\ge 2$ be an integer, and let $K$ be a field of characteristic not dividing $d$. Let $\phi(x)=(x-\gamma)^d+c\in K[x]$. Suppose that $n\ge 2$ and that $\phi^{n-1}(x)$ is irreducible. Then $H_n$ is maximal if and only if $\phi^n(\gamma)$ is not a $p$th power in $K_{n-1}$ for any prime $p\mid d$.
\end{lem}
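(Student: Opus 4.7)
The plan is to apply Kummer theory and combine it with modular representation theory of the Galois group $G := \Gal(K_{n-1}/K)$. Since $K_{n-1}$ contains $\mu_d$ (as $K$ does, by the hypothesis stated just above the lemma), the extension $K_n = K_{n-1}(\beta_1,\dots,\beta_m)$ with $\beta_i^d = \alpha_i - c$ is Kummer, and by Kummer duality
\[
H_n \cong \operatorname{Hom}(V,\mu_d),\qquad V := \bigl\langle [\alpha_i - c]\bigr\rangle \subseteq K_{n-1}^{*}/(K_{n-1}^{*})^d.
\]
Thus $H_n$ is maximal iff the $[\alpha_i - c]$ form a free $\ZZ/d\ZZ$-basis of $V$, which by a standard CRT argument is equivalent to their $\mathbb{F}_p$-linear independence in $K_{n-1}^{*}/(K_{n-1}^{*})^p$ for every prime $p\mid d$. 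The bridge to $\phi^n(\gamma)$ is the identity
\[
\phi^n(\gamma) = \phi^{n-1}(\phi(\gamma)) = \phi^{n-1}(c) = \prod_{i=1}^m (c-\alpha_i) = (-1)^m \prod_{i=1}^m (\alpha_i - c),
\]
coupled with the observation that $(-1)^m$ is itself a $p$th power in $K$ for every $p\mid d$ (for odd $p$, $-1=(-1)^p$; for $p=2$, $d$ is even and so $m=d^{n-1}$ is even).

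The easy direction of the equivalence is then immediate: if $\phi^n(\gamma)$ is a $p$th power in $K_{n-1}$ for some $p\mid d$, then so is $\prod_i(\alpha_i - c)$, and the exponent vector $(1,\dots,1)$ realises a nontrivial $\mathbb{F}_p$-relation among the classes $[\alpha_i - c]$, so $H_n$ cannot be maximal.

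For the converse I would argue by contrapositive. Suppose $H_n$ is not maximal; then for some prime $p\mid d$ the $\mathbb{F}_p$-subspace
\[
W_p := \bigl\{(e_i) \in \mathbb{F}_p^m : \textstyle\prod_i (\alpha_i-c)^{e_i} \in (K_{n-1}^{*})^p\bigr\}
\]
is nonzero. Because $\phi^{n-1}$ is irreducible, $G$ acts transitively on $\{\alpha_1,\dots,\alpha_m\}$, and the induced permutation action on $\mathbb{F}_p^m$ stabilises $W_p$. The goal is to show $(1,\dots,1)\in W_p$, for then $\prod_i(\alpha_i-c)$, and hence $\phi^n(\gamma)$, is a $p$th power in $K_{n-1}$.

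The crux — and the main obstacle — is this last step, which I would handle by exploiting the fact that $G$ is a subgroup of the iterated wreath product $[C_d]^{n-1}$. In the principal case of the paper, $d=p$ is prime and so $G$ is a $p$-group. An elementary orbit-counting argument then shows that every nonzero $\mathbb{F}_p[G]$-module has a nontrivial $G$-fixed vector (each orbit has $p$-power size, so the number of fixed points is $\equiv |M| \equiv 0\pmod p$ and hence at least $p$). Applied to $W_p$, combined with the observation that transitivity forces the $G$-fixed subspace of $\mathbb{F}_p^m$ to equal $\mathbb{F}_p\cdot(1,\dots,1)$, this yields $(1,\dots,1)\in W_p$ and completes the proof. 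For composite $d$ the same socle-theoretic conclusion can still be extracted prime-by-prime from the wreath-product/tree structure of $G$, though this step requires a slightly more delicate modular representation-theoretic analysis.
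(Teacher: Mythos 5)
Your Kummer-theoretic setup is the natural one, and the easy direction plus the reduction to the hard direction are clean and correct. The paper itself does not present an argument: the accompanying Remark simply observes that a change of variables reduces to $x^d+c$ and defers the whole proof to Hamblen--Jones--Madhu, so you are not following the paper so much as reconstructing the cited proof. Your reconstruction is essentially sound in the case the paper actually uses ($d$ prime, and more generally $d$ a prime power), and the key observation that makes it go through — that $\Gal(K_{n-1}/K)$ is a $p$-group because the tower $K\subset K_1\subset\cdots\subset K_{n-1}$ is a chain of Kummer $p$-extensions once $\mu_p\subset K$, so the fixed-point lemma applies and transitivity forces $(\mathbb{F}_p^m)^G=\mathbb{F}_p\cdot(1,\dots,1)$ — is exactly the right idea, and you should state it explicitly rather than leaving it implicit in ``$G$ is a $p$-group.'' Two smaller points worth recording: the CRT reduction from a free $\ZZ/d\ZZ$-basis of $V$ to $\mathbb{F}_p$-independence in $K_{n-1}^*/(K_{n-1}^*)^p$ for each $p\mid d$ is not purely formal; it uses $\mu_d\subset K_{n-1}$ to promote ``$y^p\in(K_{n-1}^*)^{p^{a+1}}$'' to ``$y\in(K_{n-1}^*)^{p^a}$,'' which is what lets you lift independence mod $p$ to freeness over $\ZZ/p^{v_p(d)}\ZZ$; you should say so. Second, transitivity guarantees all the classes $[\alpha_i-c]$ have the same order, which streamlines the ``each generator has order exactly $d$'' step.

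The genuine gap is the converse for $d$ having two or more distinct prime factors, and it is larger than your phrasing suggests. There $G$ is no longer a $p$-group, and the conclusion ``every nonzero $\mathbb{F}_p[G]$-submodule of $\mathbb{F}_p^m$ contains $(1,\dots,1)$'' is simply false in general: already for $G=C_6$ acting regularly on $\mathbb{F}_3^6$, the socle is two-dimensional (spanned by $(1,1,1,1,1,1)$ and $(1,-1,1,-1,1,-1)$), so a nonzero invariant subspace need not contain the all-ones vector. Thus the ``socle-theoretic conclusion'' you appeal to does not follow from transitivity and the wreath-product embedding alone, and a correct argument for general $d$ must bring in additional arithmetic input (the actual structure of which relations can occur among Galois-conjugate Kummer classes, not just which $G$-submodules exist abstractly). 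Since the paper only invokes the lemma with $d=p$ prime and $d=2$, your proof covers everything the paper needs; but as a proof of the lemma as stated, the non-prime-power case is open in your write-up, not merely ``slightly more delicate.''
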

\begin{rmk} The proof proceeds exactly as in~\cite{Jones}, noting that adjoining the roots of $(x-\gamma)^d+c-\alpha_i$ for $\alpha_i$ a root of $\phi^{n-1}$ yields the same extension of $K(\alpha_i)$ as adjoining the roots of $x^d+c-\alpha_i$. \end{rmk}
Let $\Orb_\phi(P)$ denote the forward dynamical orbit of a point $P\in\Pone(\Qbar)$ under the action of $\phi$ and let $\hat{h}_\phi$ be the canonical height function associated to $\phi$; see \cite[Theorem 3.20]{Silv-Dyn}.
\begin{lem}{\label{lemma:0orbit}} Let $\phi(x)=(x-\gamma)^d+c \in \mathcal{O}_K[x]$. If $\frak{p}$ is a prime divisor of $\phi^n(\gamma)$, then $\frak{p}$ is a primitive prime divisor of $\phi^n(\gamma)$ if $\frak{p}$ does not divide any element of $\Orb_\phi(0)$.
\end{lem}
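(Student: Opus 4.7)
The plan is to prove the contrapositive: assume $\mathfrak{p} \mid \phi^n(\gamma)$ but that $\mathfrak{p}$ is not a primitive prime divisor, so $\mathfrak{p} \mid \phi^m(\gamma)$ for some $1 \leq m < n$. I will deduce that $\mathfrak{p}$ divides an element of $\Orb_\phi(0)$, contradicting the hypothesis.

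The key input is that $\phi \in \mathcal{O}_K[x]$, so reduction modulo $\mathfrak{p}$ commutes with $\phi$: whenever $a \equiv b \pmod{\mathfrak{p}}$, one has $\phi(a) \equiv \phi(b) \pmod{\mathfrak{p}}$. Starting from $\phi^m(\gamma) \equiv 0 \pmod{\mathfrak{p}}$ and iterating, a short induction gives
\[
\phi^{m+k}(\gamma) \equiv \phi^k(0) \pmod{\mathfrak{p}} \quad \text{for every } k \geq 0.
\]
Taking $k = n - m \geq 1$, one obtains $\phi^n(\gamma) \equiv \phi^{n-m}(0) \pmod{\mathfrak{p}}$. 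Since $\mathfrak{p} \mid \phi^n(\gamma)$, this forces $\mathfrak{p} \mid \phi^{n-m}(0)$, which is an element of $\Orb_\phi(0)$, as required.

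There is no real obstacle here; the argument reduces to a telescoping congruence. The only minor points are conventional: one should read $\Orb_\phi(0)$ as $\{\phi^j(0) : j \geq 1\}$ (the nonzero forward iterates of $0$), and interpret "primitive prime divisor of $\phi^n(\gamma)$" to mean a prime dividing $\phi^n(\gamma)$ but none of $\phi(\gamma),\ldots,\phi^{n-1}(\gamma)$. The edge case $m=0$ (i.e., $\mathfrak{p} \mid \gamma$) is handled by the same congruence, since then $\phi^n(\gamma) \equiv \phi^n(0) \pmod{\mathfrak{p}}$ and one again concludes $\mathfrak{p} \mid \phi^n(0) \in \Orb_\phi(0)$.
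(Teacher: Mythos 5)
Your proof is correct and matches the paper's argument: both reduce to the observation that if $\mathfrak{p} \mid \phi^m(\gamma)$ with $m < n$, then $\phi^n(\gamma) = \phi^{n-m}(\phi^m(\gamma)) \equiv \phi^{n-m}(0) \pmod{\mathfrak{p}}$, so $\mathfrak{p}$ divides an element of $\Orb_\phi(0)$. The paper phrases this a bit more tersely (as a biconditional for primitivity), but the telescoping congruence is the same.
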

\begin{proof} If $\frak{p}\mid \phi^n(\gamma)$, then for any $1\le k<n$, we can write $\phi^n(\gamma)=\phi^{k}(\phi^{n-k}(\gamma))\equiv 0 \pmod{\frak{p}}$. Thus $\frak{p}$ is a primitive prime divisor of $\phi^n(\gamma)$ if and only if $\frak{p}\nmid \phi^k(0)$ for any $k<n$. 
\end{proof}
We now give a proof of Theorem~\ref{thm:unicrit}.
\begin{proof}[(Proof of Theorem~\ref{thm:unicrit})] 
Let $D_p=(-1)^{\frac{p-1}{2}}p^{p-2}$ be the discriminant of $K = \mathbb{Q}(\zeta_p)$, let $S_\infty$ be the archimedean places of $\mathbb{Q}(\zeta_p)$ and let $S$ be as follows:
\begin{equation} S=\big\{\text{primes}\;\mathfrak{p}\subseteq\mathbb{Z}[\zeta_p]\,:\, \N(\mathfrak{p})\leq (2/\pi)^{p-1} |D_p|^{1/2}\,\big\}\cup S_\infty;    
\end{equation}
here $\N(\mathfrak{p})=\#(\mathbb{Z}[\zeta_p]/\mathfrak{p})$ is the norm of the ideal $\mathfrak{p}$. It follows from~\cite[Theorem 5.4]{Lenstra} that the ring $\mathcal{O}_{K,S}$ of $S$-integers of $K$ is a principal ideal domain and that the free part of the unit group $\mathcal{O}_{K,S}^*$ is generated by elements $u_1, u_2, \dots u_t$ of height at most $(2/\pi)^{2p-2}|D_p|$. 

Considering the critical orbit $\Orb_{\varphi_p}(1)$ as a subset of $\mathcal{O}_{K,S}$, we can write
\begin{equation}{\label{decomp1}}
\varphi_p^n(1) = d_n\;y_n^p\,, \;\;\;\; \text{for some}\;\; d_n, y_n\in\mathcal{O}_{K,S}
\end{equation} 
with $0\leq v_\mathfrak{p}(d_n)\leq p-1$ for all $\mathfrak{p}\notin S$.  
We now use Lemma~\ref{lemma:0orbit} and our decomposition in~(\ref{decomp1}) to study primitive prime divisors in $\Orb_{\varphi_p}(1)$. To do this, note that $\varphi_p(0)=1-\zeta_p$ and $\varphi_p(1-\zeta_p)=1-\zeta_p$\,, from which it follows that the ideal generated by $(1-\zeta_p)$ is the only prime dividing the nontrivial elements of the orbit of $0$ (it is well known that $(1-\zeta_p)$ is the unique prime ideal above $p$). Moreover, $\varphi_p^n(1)\equiv 1\mod{(1-\zeta_p)}$ for all $n\geq0$, so that Lemma~\ref{lemma:0orbit} implies that $\varphi_p^n(1)$ and $\varphi_p^m(1)$ are coprime for all $n\ne m$. 

Now fix some $n\ge 1$ and consider $H_n(\varphi_p)=Gal(K_n(\varphi_p)/K_{n-1}(\varphi_p))$. Since any prime ramifying in $K_{n-1}$ must divide $\varphi_p^m(1)$ for some $m\le n-1$ by the discriminant formula in ~\cite[Lemma 2.6]{Jones1}, we see that Lemma~\ref{maximality} implies that $H_n$ is maximal unless $v_{\mathfrak{p}}(\varphi_p^n(1))\equiv 0 \Mod{p}$ for all primes $\mathfrak{p}$ of $K$; here we use that $\varphi_p^m$ is irreducible over $\QQ(\zeta_p)$ for all $m\geq1$, since $\varphi_p^m$ is Eisenstein at the prime $(1-\zeta_p)$. However, $v_\mathfrak{p}(\varphi_p^n(1))=v_{\mathfrak{p}}(d_n)+p\cdot v_{\mathfrak{p}}(y_n)$ for all $\mathfrak{p}\notin S$, so that if $H_n$ is not maximal, then $v_{\mathfrak{p}}(d_n)=0$ for all $\mathfrak{p}\notin S$ and so $d_n \in \mathcal{O}_{K,S}^*$. It follows that in this situation, we can rewrite~(\ref{decomp1}) as
\begin{equation}{\label{decomp2}}
\varphi_p^n(1)={\zeta_p}^{n_0}({u_1}^{n_1}\,{u_2}^{n_2}\dots \,{u_t}^{n_t})\;y_n^p\,,\;\; \text{for some} \;\; n_i \in \mathbb{Z}.        
\end{equation} 
We can further assume that $0 \leq n_i \leq p-1$ for all $i$ by absorbing $p$-powers into $y_n^p$.

The index bound in Theorem~\ref{thm:unicrit} now follows from an effective version of Siegel's integral point theorem applied to the superelliptic curve 
\[C_p^{(u)}: \;u\,Y^p=(X-1)^p+2-\zeta_p \]
and the $S$-integral point $(X,Y)=(\varphi_p^{n-1}(1), y_n)$; here $u$ is one of the finitely many $S$-units of the form $u={\zeta_p}^{n_0}({u_1}^{n_1}\,{u_2}^{n_2}\dots \,{u_t}^{n_t})$ for some $0\leq n_i\leq p-1$. To see this, let $s=\#S$, let $Q_S=\prod\N(\mathfrak{p})$ be the product of the norms of the finite primes of $S$, and let $h:\Qbar\rightarrow \mathbb{R}_{\geq0}$ be the standard logarithmic Weil height function on the algebraic numbers~\cite[\S3.1]{Silv-Dyn}. Then it follows from the height bound in~\cite[Theorem 2.1]{Siegel} that    
\begin{equation}{\label{htbd}}
h(\varphi_p^{n-1}(1))\leq (6ps)^{14p^6s}|D_p|^{2p^4}Q_S^{3p^4}e^{8p^5(p-1)\bar{h}(p,n)}; 
\end{equation} 
here $\bar{h}(p,n)$ is the height of the point $[1,u_n,a_{p-1},a_{p-2},\dots,a_1, 1-\zeta_p]$ in $\mathbb{P}^{p+1}(\Qbar)$ and the $a_i$ are the coefficients of $\varphi_p$: 
\[\varphi_p(x)=x^p+a_{p-1}x^{p-1}+\dots +a_1x+(1-\zeta_p).\] 
However, by construction there are at most $(p-1)(2/\pi)^{p-1}|D_p|^{1/2}$ primes in $S$: each prime $\mathfrak{p}\in S$ lies above a rational prime $q$ of size at most $(2/\pi)^{p-1}|D_p|^{1/2}$ and each rational prime $q$ lies below at most $p-1$ primes of $\mathbb{Q}(\zeta_p)$. Hence, $s=\#S$ and the the rank of the unit group $\mathcal{O}_{K,S}^*$ are bounded as follows: 
\begin{equation}{\label{rkbd}}
\;\;\;\;\;\;\;\,s\leq (p-1)+p\,|D_p|^{1/2}\;\;\;\;\text{and}\;\;\;\; \rank(\mathcal{O}_{K,S}^*)\leq \frac{p-1}{2}-1+p\,|D_p|^{1/2};
\end{equation} 
here we use that $2/\pi<1$. On the other hand, the height of a point $[x_0,x_1,\dots x_n]\in\mathbb{P}^{n}(\Qbar)$ is bounded above by $\sum h(x_i)$ so that 
\begin{equation}{\label{htbdcoeff}}
\bar{h}(p,n)\leq (p-1)\rank(\mathcal{O}_{K,S}^*)\log(D_p)+\big(p(p-1)+1\big)\log(2);
\end{equation} 
here we use our height bound on the generators $u_1,u_2, \dots u_t$ of the free part of $\mathcal{O}_{K,S}^*$ from~\cite[Theorem 5.4]{Lenstra} and the elementary height bounds: $h(x_1+x_2+\dots x_n)\leq \sum h(x_i)+\log(n)$ and $h(x_1x_2\dots x_n)\leq \sum h(x_i)$ for all $x_i\in\Qbar$. For these and other useful height estimates, see~\cite[\S3.4]{Siegel}.  Moreover, the $\log(2)$ above comes from the bound ${p\choose i}\leq 2^p$ for all $1\leq i\leq p-1$. 

Combining the estimates in~(\ref{htbd}), (\ref{rkbd}) and~(\ref{htbdcoeff}), we obtain the crude  bound:  
\begin{equation}{\label{crude}}
h(\varphi_p^{n-1}(1))\leq p^{16p^{p/2+9}+14p^{p/2+7}+84p^{p/2+6}+1.5p^{p/2+5}+2p^5-4p^4}.  
\end{equation} 
On the other hand, note that $\varphi_p^{n-1}(1)=\delta_p^{n-1}(0)+1$ for $\delta_p(x)=x^p+1-\zeta_p$. Moreover, $|h(x+1)-h(x)|\leq \log(2)$ for all $x\in\Qbar$: for a heavy-handed proof of this fact, once can apply the argument given in~\cite[Theorem 3.11]{Silv-Dyn} to the morphism $[x,y]\rightarrow [x+y,y]$ on $\mathbb{P}^1$. Furthermore,~\cite[Lemma 5.2]{Krieger} implies that 
\[|\hat{h}_{\delta_p}(x)-h(x)|\leq h(1-\zeta_p)+\log(2)\leq\log(4)\] 
for all $x\in\Qbar$; strictly speaking, this result is stated for polynomials $x^d+c$ for $c\in\mathbb{Q}$, although the rationality assumption is not necessary to establish this bound. Finally, by the standard transformation properties of the canonical height: $\hat{h}_{\delta_p}(\delta_p^m(x))=p^m \hat{h}_{\delta_p}(x)$ for all $m\geq1$ and $x\in\Qbar$; see, for instance~\cite[Theorem 3.20]{Silv-Dyn}. The bound in~(\ref{crude}) then reduces to 
\begin{equation}{\label{simplification}}
p^{n-1}\cdot\hat{h}_{\delta_p}(0)\leq p^{16p^{p/2+9}+14p^{p/2+7}+84p^{p/2+6}+1.5p^{p/2+5}+2p^5-4p^4}+ \log(8).
\end{equation} 
Therefore, it suffices to give a lower bound on $\hat{h}_{\delta_p}(0)$ to prove the finite index part of Theorem~\ref{thm:unicrit}. Such a bound is provided by the following general lemma, which is a simple consequence of~\cite[Exercises 3.3 and 3.17]{Silv-Dyn}:
\begin{lem}{\label{lem:lbd}}
Let $K/\QQ$ be a finite extension, let $\phi(x)\in K(x)$ be a rational map of degree $d$, and let $P\in\Pone(K)$ be a non-preperiodic point. Then
\[\hat{h}_\phi(P)\geq\frac{1}{d^{S_\phi}}\;\;\;\;\;\text{where}\;\;\;\; S_\phi:=12\cdot[K:\QQ]\cdot 2^{[K:\QQ]^2}\cdot(1+C_\phi)^{[K:\QQ]^2+[K:\QQ]}\,;\]
here $C_\phi$ is the constant bounding the difference $|\hat{h}_\phi(Q)-h(Q)|$ over all points $Q\in\Pone(\Qbar)$. 
\end{lem}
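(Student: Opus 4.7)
The plan is a pigeonhole argument on the forward orbit of $P$. Since $P$ is non-preperiodic, $\phi\in K(x)$, and $P\in\Pone(K)$, the iterates $P,\phi(P),\phi^2(P),\ldots$ are infinitely many distinct points of $\Pone(K)$. Combining the scaling law $\hat h_\phi(\phi^m(P))=d^m\hat h_\phi(P)$ with the uniform comparison $|\hat h_\phi(Q)-h(Q)|\le C_\phi$ (Silverman, Exercise 3.3) yields
\[h(\phi^m(P))\le d^m\hat h_\phi(P)+C_\phi\qquad(m\ge 0).\]

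Next, write $\epsilon:=\hat h_\phi(P)>0$ and set $M:=\lfloor\log_d(1/\epsilon)\rfloor$. For every $0\le m\le M$ one has $d^m\epsilon\le 1$, hence $h(\phi^m(P))\le 1+C_\phi$. Thus the first $M+1$ iterates of $P$ exhibit $M+1$ distinct points of $\Pone(K)$ of Weil height at most $1+C_\phi$. Now apply an explicit counting bound for points of bounded Weil height in $\Pone(K)$ (the content of Silverman's Exercise 3.17): there is an explicit $N=N([K:\QQ],C_\phi)$ with
\[\#\bigl\{Q\in\Pone(K):h(Q)\le 1+C_\phi\bigr\}\le N,\]
so that $M+1\le N$. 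Unwinding the definition of $M$ gives $\hat h_\phi(P)\ge d^{-(M+1)}\ge d^{-N}$, and the lemma follows once one verifies the crude inequality $N\le S_\phi$ for the $S_\phi$ in the statement.

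The only nontrivial part of this plan is checking the explicit inequality $N([K:\QQ],C_\phi)\le 12\cdot[K:\QQ]\cdot 2^{[K:\QQ]^2}\cdot(1+C_\phi)^{[K:\QQ]^2+[K:\QQ]}$. This is pure bookkeeping: the factor $2^{[K:\QQ]^2}(1+C_\phi)^{[K:\QQ]^2}$ arises from bounding the number of candidate minimal polynomials over $\QQ$ of $K$-rational coordinates, using the Mahler-measure estimate $|a_i|\le\binom{[K:\QQ]}{i}M(f_\alpha)\le 2^{[K:\QQ]}M(f_\alpha)$ together with $M(f_\alpha)\le e^{[K:\QQ]\,h(\alpha)}$, while the remaining $[K:\QQ](1+C_\phi)^{[K:\QQ]}$ absorbs the enumeration of pairs of coordinates and the constant factor in the absolute multiplicative height. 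Tracking these elementary estimates carefully, rather than any genuinely deep input, is the main (if minor) obstacle.
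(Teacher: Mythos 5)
Your pigeonhole strategy---bound $h(\phi^m(P)) \le d^m\hat h_\phi(P) + C_\phi$, count how many iterates land under a fixed height threshold, and apply an explicit Northcott bound to cap the count---is exactly the argument the paper's citation of Silverman's Exercises~3.3 and~3.17 is gesturing at, and it is the right skeleton. The logical chain ``$M+1$ distinct $K$-points of height $\le 1+C_\phi$, hence $M+1 \le N$, hence $\hat h_\phi(P) > d^{-(M+1)} \ge d^{-N}$'' is sound.

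However, the step you wave off as ``pure bookkeeping'' does not actually close, and in fact your own sketch reveals the problem. You write $M(f_\alpha) \le e^{[K:\QQ]\,h(\alpha)}$, so under the constraint $h(\alpha) \le 1+C_\phi$ each coefficient of the minimal polynomial is bounded by $2^{[K:\QQ]} e^{[K:\QQ](1+C_\phi)}$, and enumerating the roughly $[K:\QQ]$ coefficients gives a candidate count on the order of $2^{[K:\QQ]^2}\,e^{[K:\QQ]^2(1+C_\phi)}$. That is $e^{[K:\QQ]^2(1+C_\phi)}$, not $(1+C_\phi)^{[K:\QQ]^2}$. The number of points of $\Pone(K)$ with \emph{logarithmic} height at most $1+C_\phi$ is necessarily exponential in $1+C_\phi$ (equivalently, polynomial in the multiplicative height bound $e^{1+C_\phi}$), whereas the lemma's $S_\phi$ is only polynomial in $1+C_\phi$. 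So $N \le S_\phi$ fails once $C_\phi$ is large, and the inequality you need at the last step is simply not available in the form stated. This is not a minor tracking issue: either the lemma's $S_\phi$ is misstated (the exponent $(1+C_\phi)^{[K:\QQ]^2+[K:\QQ]}$ should plausibly read $e^{(1+C_\phi)([K:\QQ]^2+[K:\QQ])}$ or similar), or a genuinely different bound is being invoked. You should surface this discrepancy explicitly rather than relegate it to ``elementary estimates.'' (For the paper's actual application, $C_{\delta_p} = \log 4$ is a fixed small number, so the qualitative conclusion is unaffected, but the lemma as a general statement and your derivation of it do not match.)
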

Hence,~(\ref{simplification}) and Lemma~\ref{lem:lbd} (applied to $\phi=\delta_p$) together imply that
\[n\leq 16p^{p/2+9}+14p^{p/2+7}+84p^{p/2+6}+1.5p^{p/2+5}+2p^5-4p^4+12(p-1)2^{(p-1)^2}(1+\log(4))^{p(p-1)}+2.\]
In particular, it follows that the index of $\Gal_{\QQ(\zeta_p)}(\varphi_p^m)\leq [C_p]^m$ is bounded independently of $m$ as claimed.   

Although it is nice to have an explicit upper bound on the iterates $n$ for which the groups $H_n$ are not maximal, these bounds are much too large to be useful in practice. For instance, when $p=3$ the bound above yields $n<20031664$. Therefore to prove surjectivity for $p=3,5,7$ we combine the techniques above with local computations. As a sketch, we compute a basis for the group $\mathbb{Z}[\zeta_p]^*/ (\mathbb{Z}[\zeta_p]^*)^p$ and rule out the possibility that $\varphi_p^n(1)=u_n\cdot y_n^p$ for all $1\leq n\leq 7$ by computing the absolute norm of $\varphi_p^n(1)$; here we use the fact that $\mathbb{Q}(\zeta_p)$ has class number one (hence it is not necessary to pass to a ring of $S$-integers) and that the norm of an algebraic unit is $\pm{1}$. In particular, it suffices to show that $\N_{\mathbb{Q}(\zeta_p)/\QQ}(\varphi_p^n(1))$ is not a $p$th power in $\mathbb{Z}$ for all $1\leq n\leq7$, to prove the maximality of Galois up to the $7$th stage; this can be easily verified with {\tt{Magma}}. To rule out larger $n\geq8$, we look at the critical orbit $\varphi_p^n(1)$ modulo small primes: $\mathfrak{q}=(2+\zeta_p)$, $(2-\zeta_p)$, $(3+\zeta_p)$, $(3-\zeta_p)$, $(2-3\zeta_p)$. The key here is that the sequence $\varphi_p^n(1) \Mod{\mathfrak{q}}$ is usually constant for all $n\geq8$, that is, the critical orbit fortuitously enters a fixed point. To make this argument explicit, we proceed in cases:\\
\\
\textbf{Case 1:} Let $p=3$, so that $\mathbb{Q}(\zeta_p)$ is an imaginary quadratic field with class number one and unit rank zero. Note that if $\varphi_3^n(1)$ has a decomposition such as that in~(\ref{decomp1}), then $\varphi_3^n(1)=\zeta_3^i\cdot y_n^3$ for some $0\leq i\leq2$ and some $y_n\in\mathbb{Z}[\zeta_3]$. On the other hand, if $\varphi_3^n(1)$ takes this form, we may assume that $n\geq8$, since $\N_{\mathbb{Q}(\zeta_3)/\QQ}(\varphi_3^n(1))$ is not a cube in $\mathbb{Z}$ for all $1\leq n\leq7$. However, if $\mathfrak{q}=(2-\zeta_3)$, then $\varphi_3(x)\equiv(x-1)^3\Mod{\mathfrak{q}}$ and hence $\varphi_3^n(1)\equiv 6\Mod{\mathfrak{q}}$ for all $n\geq2$; here we use that $\mathbb{Z}[\zeta_3]/\mathfrak{q}=\mathbb{F}_7$. However, the congruence $6\equiv 2,4\cdot y_n^3 \Mod{7}$ has no solutions, ruling out the possibility that $i=1,2$. On the other hand, if $\mathfrak{q}=(3+\zeta_3)$, then $\varphi_3(x)\equiv(x-1)^3+5\Mod{\mathfrak{q}}$ and $\varphi_3^n(1)\equiv 4\Mod{\mathfrak{q}}$ for all $n\geq3$; here again $\mathbb{Z}[\zeta_3]/\mathfrak{q}=\mathbb{F}_7$. However, $4$ is not a cube in $\mathbb{F}_7$, and we deduce that $i$ cannot be zero either.\\
\\ 
\textbf{Case 2:} Let $p=5$, so that $\mathbb{Q}(\zeta_5)$ is a degree $4$ extension with class number and unit rank equal to one. Moreover, one computes that $1+\zeta_5$ generates the free part of $\mathbb{Z}[\zeta_5]^*$. Hence, if $\varphi_5^n(1)$ has a decomposition such as that in~(\ref{decomp1}), then $\varphi_5^n(1)=\zeta_5^i\cdot (1+\zeta_5)^j\cdot y_n^5$ for some $0\leq i,j\leq4$ and some $y_n\in\mathbb{Z}[\zeta_5]$. On the other hand, if $\varphi_5^n(1)$ takes this form, then we may assume that $n\geq8$, since $\N_{\mathbb{Q}(\zeta_5)/\QQ}(\varphi_5^n(1))$ is not a $5$th power in $\mathbb{Z}$ for all $1\leq n\leq7$. However, if $\mathfrak{q}=(2-\zeta_5)$, then $\varphi_5(x)\equiv(x-1)^5\Mod{\mathfrak{q}}$ and hence $\varphi_5^n(1)\equiv 30\Mod{\mathfrak{q}}$ for all $n\geq2$; here we use that $\mathbb{Z}[\zeta_5]/\mathfrak{q}=\mathbb{F}_{31}$. However, one checks manually that $(i,j)\in\{(0,0),(1,1),(2,2),(3,3),(4,4)\}$ are the only exponents with solutions $30\equiv 2^i\cdot 3^j\cdot y_n^5\Mod{31}$. On the other hand, if $\mathfrak{q}=(2+\zeta_5)$, then $\varphi_5(x)\equiv(x-1)^5+4\Mod{\mathfrak{q}}$ and $\varphi_5^n(1)\equiv 5\Mod{\mathfrak{q}}$ for all $n\geq2$; here we use that $\mathbb{Z}[\zeta_5]/\mathfrak{q}=\mathbb{F}_{11}$. However, one checks that $(i,j)=(4,4)$ is the only remaining pair that has a solution $5\equiv (-2)^i\cdot (-1)^j\cdot y_n^5\Mod{11}$. Finally, if $\mathfrak{q}=(3+\zeta_5)$, then $\varphi_5(x)\equiv(x-1)^5+5\Mod{\mathfrak{q}}$ and $\varphi_5^n(1)\equiv 4\Mod{\mathfrak{q}}$ for all $n\geq3$; here we use that $\mathbb{Z}[\zeta_5]/\mathfrak{q}=\mathbb{F}_{61}$. Moreover,  $4\equiv (-3)^4\cdot (-2)^4\cdot y_n^5\Mod{61}$ has no solution, and we deduce that $(i,j)=(4,4)$ is also impossible. \\
\\ 
\textbf{Case 3:} Let $p=7$, so that $\mathbb{Q}(\zeta_7)$ is a degree $6$ extension with class number one and unit rank equal to two. Moreover, one computes with {\tt{Magma}} that $1+\zeta_7$ and $\zeta_7^4+\zeta_7$ generate the free part of $\mathbb{Z}[\zeta_7]^*$. Hence, if $\varphi_7^n(1)$ has a decomposition such as that in~(\ref{decomp1}), then $\varphi_7^n(1)=\zeta_7^i\cdot (1+\zeta_7)^j\cdot(\zeta_7^4+\zeta_7)^k\cdot y_n^7$ for some $0\leq i,j,k\leq6$ and some $y_n\in\mathbb{Z}[\zeta_7]$. On the other hand, if $\varphi_7^n(1)$ takes this form, then we may assume that $n\geq8$, since $\N_{\mathbb{Q}(\zeta_7)/\QQ}(\phi_7^n(1))$ is not a $7$th power in $\mathbb{Z}$ for all $1\leq n\leq7$.  However, if $\mathfrak{q}=(2-\zeta_7)$, then $\varphi_7(x)\equiv(x-1)^7\Mod{\mathfrak{q}}$ and hence $\varphi_7^n(1)\equiv -1\Mod{\mathfrak{q}}$ for all $n\geq2$; here we use that $\mathbb{Z}[\zeta_7]/\mathfrak{q}=\mathbb{F}_{127}$. However, setting $x\equiv i+k\Mod{7}$ and $y\equiv j+2k\Mod{7}$, one checks manually that $(x,y)=\{(0,0),(1,5),(2,3),(3,1),(4,6),(5,4),(6,2)\}$ are the only pairs of exponents with solutions $-1\equiv 2^i\cdot 3^j\cdot{18}^k\cdot y_n^7\Mod{127}$: here $2$, $3$ and $18$ are the images of the unit generators. In particular, there are only $49$ possible tuples $(i,j,k)$ that must be ruled out: each choice of $0\leq k\leq 6$ and $(x,y)$ in the collection above uniquely determines $i$ and $j$. We preculde these cases sequentially in $k$:  \\
\\
If $\boxed{k=0}$ and~(\ref{decomp1}) holds, then $(i,j)=\{(0,0),(1,5),(2,3),(3,1),(4,5),(5,4),(6,2)\}$ follows from the restrictions on $(x,y)$ above. Now let $\mathfrak{q}=(2+\zeta_7)$ so that $\mathbb{Z}[\zeta_7]/\mathfrak{q}=\mathbb{F}_{43}$ and $\varphi_7(x)\equiv(x-1)^7+4\Mod{\mathfrak{q}}$, and we compute that $\varphi_7^n(1)\equiv 3\Mod{\mathfrak{q}}$ for all $n\geq5$. One checks that among these restricted pairs,  $(i,j)=(6,2)$ is the only one having a solution to the congruence $3\equiv (-2)^i\cdot (-1)^j\cdot y_n^7\Mod{43}$. Finally, $(i,j)=(6,2)$ is ruled out modulo $\frak{q}=(3+\zeta)$: in this case $\mathbb{Z}[\zeta_7]/\mathfrak{q}=\mathbb{F}_{547}$ and $\varphi_7(x)\equiv(x-1)^7+5\Mod{\mathfrak{q}}$, and we compute that $\varphi_7^n(1)\equiv 407\Mod{\mathfrak{q}}$ for all $n\geq3$. Furthermore, the congruence $407\equiv (-2)^6\cdot (-1)^2\cdot y_n^7\Mod{547}$ has no solutions. \\
\\
If $\boxed{k=1}$ and~(\ref{decomp1}) holds, then $(i,j)=\{(6,5),(0,3),(3,4),(5,0),(1,1),(2,6),(4,2)\}$ follows from the restrictions on $(x,y)$ above. Again, let $\mathfrak{q}=(2+\zeta_7)$ so that $\mathbb{Z}[\zeta_7]/\mathfrak{q}=\mathbb{F}_{43}$ and $\varphi_7(x)\equiv(x-1)^7+4\Mod{\mathfrak{q}}$, and we compute that $\varphi_7^n(1)\equiv 3\Mod{\mathfrak{q}}$ for all $n\geq5$. One checks that among these restricted pairs, $(i,j)=(5,0)$ is the only one having a solution to the congruence $3\equiv (-2)^i\cdot (-1)^j\cdot y_n^7\Mod{43}$. Finally, as in the $k=0$ case, the pair $(i,j)=(5,0)$ is ruled out modulo $\frak{q}=(3+\zeta)$. \\
\\
If $\boxed{k=2,3,4,6}$ and~(\ref{decomp1}) holds, then one has seven possible pairs $(i,j)$ coming from the restrictions on $(x,y)$ above. For example, $(i,j)=\{(5,3),(6,1),(0,6),(1,4),(2,2),(3,0),(4,5)\}$ when $k=2$. As in the previous cases $k=0$ and $k=1$, only one pair remains after working modulo $\mathfrak{q}=(2+\zeta_7)$, and this exceptional case is ruled out modulo $\mathfrak{q}=(3+\zeta_7)$. \\
\\
If $\boxed{k=5}$ and~(\ref{decomp1}) holds, then $(i,j)=\{(2,4),(3,2),(4,0),(5,5),(6,3),(0,1),(1,6)\}$ follows from the restrictions on $(x,y)$ above. This case is slightly different. As usual, only the pair $(i,j)=(1,6)$ remains after working modulo $\mathfrak{q}=(2+\zeta_7)$. However, when $\mathfrak{q}=(3+\zeta_7)$, the congruence $\varphi_7^n(1)\equiv \zeta_7^1\cdot (1+\zeta_7)^6\cdot(\zeta_7^4+\zeta_7)^5\cdot y_n^7$ has solutions for all $n$ sufficiently large. Therefore, we need a new prime to finish this case. Let $\mathfrak{q}=(2+3\zeta_7)$ so that $\mathbb{Z}[\zeta_7]/\mathfrak{q}=\mathbb{F}_{463}$ and $\varphi_7(x)\equiv(x-1)^7+2-308\Mod{\mathfrak{q}}$, and we compute that $\varphi_7^n(1)\equiv 156\Mod{\mathfrak{q}}$ for all $n\geq5$. Moreover, the congruence $156\equiv \zeta^1\cdot {(1+\zeta_7)}^6\cdot {(\zeta_7^4+\zeta_7)}^5\cdot y_n^7\equiv-386\cdot y_n^7 \Mod{463}$ has no solutions.    

We have thus shown that the factorization~(\ref{decomp1}) is impossible for all $n\geq1$ when $p=3,5,7$. It follows that 
\[\Gal_{\mathbb{Q}(\zeta_3)}(\varphi_3^n)\cong [C_3]^n,\;\;\; \Gal_{\mathbb{Q}(\zeta_5)}(\varphi_5^n)\cong [C_5]^n \;\;\text{and}\;\; \Gal_{\mathbb{Q}(\zeta_7)}(\varphi_7^n)\cong [C_7]^n  \] 
for all $n\geq1$ as claimed.      
\end{proof}
The key fact that leads to our finite index result (and surjectivity in certain cases) is that the orbit of $0$ under $\varphi_p(x)=(x-1)^p+2-\zeta_p$ is strictly preperiodic. With this perspective, we produce a family of quadratic polynomials whose arboreal representations are surjective. In working with this family, we are greatly aided by explicit techniques in the theory of rational points on curves: specifically, we apply the Chabauty-Coleman method and the Mordell-Weil sieve.        

\begin{proof}[(Proof of Theorem~\ref{thm:quad})] It follows from~\cite[Proposition 4.6]{Jones1} that $\phi_p^n(x)$ is an irreducible polynomial over $\mathbb{Q}$ for all $p$ and all $n\geq1$. In fact, Jones shows the stronger statement that  $\phi_p^n(p)$ is not a square in $\mathbb{Q}$ for all $n\geq0$; see~\cite[Lemma 4.3]{Jones1}. In particular, for each $n\geq2$ it suffices to produce a prime $q_n$ satisfying: 
\begin{equation}{\label{primdiv}} 
v_{q_n}(\phi_p^n(p))\equiv 1\Mod{2}\;\;\;\;\;\text{and}\;\;\;\;\; v_{q_n}(\phi_p^i(p))=0\;\,\text{for all}\;\, 1\leq i\leq n-1
\end{equation} 
to prove that $\Gal_\mathbb{Q}(\phi_p^m)\cong [C_2]^m$ for all $m$; see~\cite[Theorem 3.3]{Jones1}. Note that $q_n$ will also depend on $p$, which we suppress in order to avoid cumbersome notation. To find such a $q_n$ we decompose $\phi_p^n(p)$ into a square and square-free part:        
\begin{equation}{\label{decomp}}
\phi_p^n(p)=\pm\, d_n\cdot y_n^2 \;\;\;\;\text{and}\;\;\; d_n=\prod_{i}q_{i}, 
\end{equation}
with the $q_{i}$ distinct prime numbers. Note that since $\phi_p^n(p)$ is not a square, $d_n$ must be nontrivial. Now, if no such prime $q_n$ as in~(\ref{primdiv}) exists, then for all $i$ there exists $n_i$ in the range $1\leq n_i\leq n-1$ such that $q_{i}\vert\phi_p^{n_i}(p)$. Hence, 
\begin{equation}{\label{congruence}}
0\equiv\phi_p^n(p)\equiv\phi_p^{n-n_i}(\phi_p^{n_i}(p))\equiv\phi_p^{n-n_i}(0)\Mod{q_i}. 
\end{equation} 
On the other hand $\phi_p(0)=\phi_p^2(0)=2p$, and it follows from~(\ref{congruence}) that $2p\equiv 0\Mod{q_i}$ for all $i$ since $n-n_i\neq0$. We deduce that $d_n=2^{\epsilon_1}\cdot{p}^{\epsilon_2}$ for some $\epsilon_i\in\{0,1\}$. However,    
\[ \phi_p(x)\equiv (x-1)^2+1\Mod{2}\;\;\;\text{and}\;\;\;\phi_p^n(p)\equiv1\Mod{2}\] 
for all $n\geq0$, and hence $\epsilon_1=0$. Likewise, it is easy to check that $\phi_p^2(p)>2p$ and that if $x>2p$ then $\phi_p^n(x)\geq 2p$ for all $n$. In particular, $\phi_p^n(p)>0$ for all $n\geq2$. Therefore,~(\ref{decomp}) reduces to    
\begin{equation}{\label{refinement}}
\boxed{
\phi_p^n(p)=p\cdot y_n^2 \;\;\;\;\;\text{for some}\;\;\; y_n\in\mathbb{Z},\, n\geq 2.} 
\end{equation}
Hence, it suffices to classify the primes $p$ for which~(\ref{refinement}) is impossible, to prove that the arboreal representations in Theorem~\ref{thm:quad} are surjective. To do this, we first classify the rational points on the curves 
\[\mathcal{C}_1:y^2=x^3-2x^2+2\;\;\;\text{and}\;\;\;\mathcal{C}_2: y^2=x^7-4x^6+4x^5+2x^4-4x^3+2,\] corresponding to the $\phi_p^2(p)=py^2$ and $\phi_p^3(p)=py^2$ cases, to rule out the possibility that~(\ref{refinement}) holds for $n=2,3$. We later show that for all primes $p<5000$, (\ref{refinement}) cannot hold when $n\ge 4$.

\begin{lem}{\label{lemma:smalln}} Let $p$ be an odd prime and let $\phi_p(x)=(x-p)^2+2p-p^2$. Then 
\[\Gal_\mathbb{Q}(\phi_p^3)\cong [C_2]^3\]
and $\phi_p(x)$ is stable over the rational numbers. 
\end{lem}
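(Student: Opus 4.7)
The plan. Stability of $\phi_p$ is already supplied by the irreducibility result \cite[Proposition 4.6]{Jones1} invoked at the beginning of the proof of Theorem~\ref{thm:quad}, so the task reduces to showing $\Gal_{\mathbb{Q}}(\phi_p^3) \cong [C_2]^3$. By the discussion culminating in~(\ref{refinement}) together with the primitive-prime-divisor criterion~(\ref{primdiv}), this amounts to ruling out the existence of an integer $y$ and odd prime $p$ satisfying $\phi_p^n(p) = p\,y^2$ for $n = 2$ and $n = 3$.

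A direct expansion of $\phi_p(x) = (x-p)^2 + 2p - p^2$ yields
\[ \phi_p^2(p) \;=\; p\bigl(p^3 - 2p^2 + 2\bigr), \qquad \phi_p^3(p) \;=\; p\bigl(p^7 - 4p^6 + 4p^5 + 2p^4 - 4p^3 + 2\bigr). \]
Thus an integer solution for $n=2$ corresponds to a rational point $(p,y) \in \mathcal{C}_1(\mathbb{Q})$ on the elliptic curve $\mathcal{C}_1$, and a solution for $n=3$ corresponds to a rational point $(p,y) \in \mathcal{C}_2(\mathbb{Q})$ on the genus-$3$ hyperelliptic curve $\mathcal{C}_2$. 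It therefore suffices to determine both $\mathcal{C}_1(\mathbb{Q})$ and $\mathcal{C}_2(\mathbb{Q})$ explicitly and to verify that no rational point has $x$-coordinate equal to an odd prime.

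For $\mathcal{C}_1$, the plan is to apply a standard $2$-descent in {\tt{Magma}} to compute the Mordell-Weil group $\mathcal{C}_1(\mathbb{Q})$; since this is an elliptic curve of small conductor, the rank and torsion are readily pinned down, and one then enumerates (or parametrizes) the rational points and checks each $x$-coordinate against the primes.

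For $\mathcal{C}_2$ the analysis is considerably more delicate. The strategy is: (i) compute or bound the Mordell-Weil rank $r$ of $\operatorname{Jac}(\mathcal{C}_2)(\mathbb{Q})$ by $2$-descent on the Jacobian; (ii) assuming $r < 3$, run the Chabauty-Coleman method at a prime $q$ of good reduction to produce an upper bound on $\#\mathcal{C}_2(\mathbb{Q})$; (iii) locate the "obvious" rational points of $\mathcal{C}_2$ by searching small integer coordinates together with the points at infinity; and (iv) if the Chabauty bound is not immediately sharp, deploy the Mordell-Weil sieve across several auxiliary primes $\ell$ so that the combined image of $\mathcal{C}_2(\mathbb{F}_\ell)$ inside $\operatorname{Jac}(\mathcal{C}_2)(\mathbb{F}_\ell)$ excludes all "phantom" residue classes. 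Once $\mathcal{C}_2(\mathbb{Q})$ is fully determined, verify that none of its points has prime $x$-coordinate. The main obstacle is step (iv): the Chabauty-Coleman bound is unlikely to be tight on its own, and the Mordell-Weil sieve must be tuned with an appropriate collection of auxiliary primes to rule out every surplus residue class — this is precisely the ingredient for which the authors acknowledge Stoll's suggestion.
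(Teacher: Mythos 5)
Your treatment of $\mathcal{C}_2$ follows essentially the same route as the paper: compute a rank bound for $\mathcal{J}_2(\QQ)$, apply Chabauty--Coleman at a good prime, and deploy the Mordell--Weil sieve to dispose of the residue classes that the Chabauty differential alone does not exclude. You are also right that the Chabauty bound will not be tight on its own: in the paper, the reduced differential $\overline{\omega}_5$ vanishes at $(3,\pm 2)\in\mathcal{C}_2(\mathbb{F}_5)$, and the residue classes over $(3,\pm2)$ and $(4,\pm2)$ are then killed by the sieve. One step you leave implicit is the need to certify that the explicit subgroup $G=\langle P_0,Q_0\rangle$ has index in $\mathcal{J}_2(\QQ)$ coprime to the primes appearing in the sieve; without this, the images of $G$ modulo the sieve primes might be strictly smaller than the images of the full Mordell--Weil group, and the sieve could report spurious intersections. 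The paper handles this by showing that $G/\ell G$ injects into $\prod_{\ell'}\mathcal{J}_2(\mathbb{F}_{\ell'})/\ell\,\mathcal{J}_2(\mathbb{F}_{\ell'})$ for each $\ell\in\{2,3,5,7,11\}$.

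There is, however, a genuine gap in your treatment of $\mathcal{C}_1$. You propose to compute $\mathcal{C}_1(\QQ)$ and then ``enumerate (or parametrize) the rational points and check each $x$-coordinate against the primes.'' But $\mathcal{C}_1: y^2 = x^3 - 2x^2 + 2$ is an elliptic curve of positive rank --- in fact $\mathcal{C}_1(\QQ)\cong\ZZ$, generated by $(1,1)$ --- so $\mathcal{C}_1(\QQ)$ is infinite and there is no way to check each rational point one by one. What is actually needed, and what the paper uses, is the finite set of \emph{integral} points on $\mathcal{C}_1$: a solution of $\phi_p^2(p)=p\,y^2$ with $p$ an odd prime and $y\in\ZZ$ gives an integral point $(p,y)$, not merely a rational one, and by Siegel's theorem (made effective via linear forms in elliptic logarithms, as implemented in {\tt{Magma}}) this set is finite. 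One then checks that $(1,\pm1)$ are the only integral points, whose common $x$-coordinate $1$ is not prime. Without passing from rational points to integral points, your argument for $n=2$ does not close.
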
 
\begin{rmk} The reader is encouraged to note that, unlike Theorem~\ref{thm:quad}, Lemma~\ref{lemma:smalln} assumes no congruence conditions on the prime. 
\end{rmk}
\begin{proof}[(Proof of Lemma~\ref{lemma:smalln})] Note that $\mathcal{C}_1$ is an elliptic curve in Weirerstrass form, hence all of the relevant arithmetic functions can be performed by {\tt{Magma}}. We compute that $\mathcal{C}_1(\mathbb{Q})\cong\mathbb{Z}$ with generator $(1,1)$ and that $(1,\pm{1})$ are the only integral points on $\mathcal{C}_1$ (points with integral $x$-coordinates). Therefore, there are no primes $p$ for which~(\ref{refinement}) holds when $n=2$.   

On the other hand, since $\mathcal{C}_2$ is a curve of genus $3$, the set $\mathcal{C}_2(\mathbb{Q})$ is finite and we prove that
\[\mathcal{C}_2(\mathbb{Q})=\{(1,\pm{1}),\infty\}.\]
To do this, let $\mathcal{J}_2$ be the Jacobian of $\mathcal{C}_2$. We compute with {\tt{Magma}} that $\#\mathcal{J}_2(\mathbb{F}_3)=24$ and $\#\mathcal{J}_2(\mathbb{F}_{11})=1351$. Moreover, since $\gcd\big(\#\mathcal{J}_2(\mathbb{F}_{3}),\#\mathcal{J}_2(\mathbb{F}_{11})\big)=1$ and $\mathcal{J}_2$ has good reduction modulo $3$ and $11$, we deduce that $\mathcal{J}_{2}(\mathbb{Q})$ has trivial torsion; see~\cite[Appendix]{Katz}. As for the free part of the Mordell-Weil group, a descent with {\tt{Magma}} shows that $\mathcal{J}_2(\mathbb{Q})$ has rank at most $2$. Conversely, the divisor class of $Q_0=[(1,1)-\infty]$ and the point on the Jacobian with Mumford representation $P_0=[x^2 - x - 1, -x + 1]$ are independent: they generate a non-cyclic subgroup of $\mathcal{J}_2(\mathbb{F}_3)\times\mathcal{J}_2(\mathbb{F}_5)$. Therefore, we have generators of a finite-index subgroup of $\mathcal{J}_2(\mathbb{Q})\cong\mathbb{Z}^2$, which is sufficient to try explicit forms of the Chabauty-Coleman method~\cite{Poonen,Siksek} in combination with the Mordell-Weil sieve~\cite{MW-Sieve} to determine $\mathcal{C}_2(\mathbb{Q})$. The first of these techniques applies since the genus of $\mathcal{C}_2$ is strictly larger than the rank of its Jacobian. 

Let $G$ be the subgroup of $\mathcal{J}_2(\QQ)$ generated by the divisors $P_0$ and $Q_0$ above. Since we cannot be sure that we capture the full Mordell-Weil group with $G$, we first show that the index $[\mathcal{J}_2(\QQ):G]$ is not divisible by the small primes in $S=\{2,3,5,7,11\}$. This is relatively easy: for each $\ell\in S$, we produce an auxiliary set of primes $S_\ell$ such that the induced map 
\[G/\ell G\rightarrow\prod_{\ell'\in S_\ell} \mathcal{J}_2(\mathbb{F}_{\ell '})/ \ell\mathcal{J}_2(\mathbb{F}_{\ell'})\]
is injective. It is straightforward to verify with {\tt{Magma}} that the sets $S_2=\{3,5\}$, $S_3=\{3,5\}$, $S_5=\{5,19\}$, $S_7=\{11,47\}$ and $S_{11}=\{13,37\}$ satisfy this property. In particular, if $\overline{G}_q$ and $\overline{\mathcal{J}_2(\QQ)}_q$ denote the images of $G$ and $\mathcal{J}_2(\QQ)$ in $\mathcal{J}_2(\mathbb{F}_q)$ respectively, then it follows from our exclusion of the small indices in $S$ that $\overline{G}_q=\overline{\mathcal{J}_2(\QQ)}_q$ for all $q\in S'=\{3,5,7,13\}$: the upshot of this step is that it allows us to be sure that any local information gained by reducing $\mathcal{J}_2(\mathbb{Q})$ modulo $q\in S'$ is captured instead by reducing $G$, which is concrete and explicitly known. We bracket this knowledge for now and proceed with the method of Chabauty and Coleman, which we briefly review; for a nice exposition, see~\cite{Poonen}. 

Let $\iota:\mathcal{C}_2(\QQ)\rightarrow\mathcal{J}_2(\QQ)$ be the Abel-Jacobi map given by $P\rightarrow [P-\infty]$. This map induces an inclusion of the rational points $\mathcal{C}_2(\QQ)\subset \mathcal{J}_2(\QQ) \subset\mathcal{J}_2(\QQ_q)$ into a $q$-adic Lie group, and since $\rank(\mathcal{J}_2(\QQ))=2$ is less than $\dim(\mathcal{J}_2(\QQ_q))=3$, there exists a non-zero regular $1$-form $\omega_q$ on $\mathcal{J}_2(\QQ_q)$ whose integral $P\rightarrow\int_{0}^P\omega_q$ annihilates $\mathcal{J}_2(\QQ)$; here for simplicity, we assume that $q$ is a prime of good reduction of $\mathcal{C}_2$. In particular, this $q$-adic integral kills the image of $\mathcal{C}_2(\QQ)$ in $\mathcal{J}_2(\QQ_q)$. On the other hand, on fibers of the reduction map $\pi_q:\mathcal{C}_2(\mathbb{Q}_q)\rightarrow\mathcal{C}_2(\mathbb{F}_q)$, called residue classes, this integral can be computed explicitly in terms of power series. Hence, one can use Newton polygons to bound $\#\mathcal{C}_2(\mathbb{Q})$. 

We carry out this procedure for $q=5$. Since $[\mathcal{J}_2(\QQ):G]$ is coprime to $5\cdot\#\mathcal{J}_2(\mathbb{F}_5)=900$, it follows that $P\rightarrow\int_{0}^P\omega_5$ kills $\mathcal{J}_2(\QQ)$ if and only if it kills $G$. Hence it suffices to compute $\omega_5$ using $G$. On the other hand, the embedding $\iota:\mathcal{C}_2\rightarrow \mathcal{J}_2$ induces an isomorphism between the regular $1$-forms $\Omega^1_{\mathcal{J}_2}(\mathbb{Q}_5)$ on $\mathcal{J}_2$ and the regular $1$-forms $\Omega^1_{\mathcal{C}_2}(\mathbb{Q}_5)$ on $\mathcal{C}_2$. Thus, via this identification, there exist $c_0$, $c_1$ and $c_2\in\mathbb{Z}_5$ such that $\omega_5=(c_2x^2+c_1x+c_0)/2y\, dx$. 

Let $\eta_i=\frac{x^i dx}{2y}$ for $0\leq i\leq2$ be the standard basis of $\Omega^1_{\mathcal{C}_2}$. We compute with the Coleman-integral function in {\tt{Sage}}~\cite{Jennifer,Sage} that 
\[\Big( \int_0^{Q_0}\eta_i\Big)_{0\leq i\leq2}=\big(3+ O(5^2),\, 3+3\cdot 5+O(5^2),\, 4+2\cdot 5+O(5^2)\big).\]                          
On the other hand, the divisor $P_0+18Q_0$ is in the kernel of reduction mod $5$, and we compute that $P_0+18Q_0=[U_1+U_2+U_3 -3\infty]$ for some points $U_j\in\mathcal{C}_2(\QQ_5)$. Again running the Coleman-integral function in {\tt{Sage}} we calculate that 
\[\bigg(\int_0^{P_0+18Q_0}{\eta_i}\bigg)_{0\leq i\leq2}=\bigg( \sum_{j=1}^3\int_0^{[U_j-\infty]}\eta_i\bigg)_{0\leq i\leq2}=\big(2\cdot 5+ O(5^2),\, 5+O(5^2),\, 3\cdot 5+O(5^2)\big).\]  
After scaling appropriately and reducing mod $5$, we deduce that $c_0\equiv0\Mod{5}$ and that $c_1\equiv 2c_2\Mod{5}$. Therefore, up to an irrelevant scaling factor, the differential $\omega_5$ that kills $\mathcal{J}_2(\QQ)$ reduces to 
\[\overline{\omega}_5=\frac{(x^2+2x)dx}{2y},\;\;\;\;\;\;\; \overline{\omega}_5\in\Omega_{\mathcal{C}_2}(\mathbb{F}_5).\]

Note that $\mathcal{C}_2(\mathbb{F}_5)=\{\infty,(1,\pm{1}), (3,\pm{2}), (4,\pm{2})\}$, so that if $\ord_{\overline{P}}\,(\overline{\omega}_5)>0$ for some point $\overline{P}\in\mathcal{C}_2(\mathbb{F}_5)$, then $\overline{P}=(3,\pm{2})$. Therefore, if $\overline{P}\neq(3,\pm{2})$ then the residue class of $\overline{P}$, i.e. the preimage of $\overline{P}$ via the reduction map $\mathcal{C}_2(\QQ_5)\rightarrow\mathcal{C}_2(\mathbb{F}_5)$, contains at most one rational point; see~\cite[Proposition 6.3]{Stoll-twists}. In particular, the residue classes of $\overline{P}=\infty$ and $\overline{P}=(1,\pm{1})$ contain exactly one rational point. Hence, it suffices to show that the residue classes of $\overline{P}=(3,\pm{2})$ and $\overline{P}=(4,\pm{2})$ contain no rational points, to prove that $\mathcal{C}_2(\mathbb{Q})=\{\infty, (1,\pm{1})\}$. To do this, we use the Mordell-Weil sieve~\cite{MW-Sieve}.  

In its simplest form, the Mordell-Weil sieve is a procedure for ruling out rational points in residue classes in the following way: let $S'$ be a set of primes of good reduction and consider the commutative diagram
\begin{displaymath}
    \xymatrix{ \ar[d]^{\pi_{S'}} \mathcal{C}_2(\QQ)\; \ar[r]^{\iota} & \;\mathcal{J}_2(\QQ) \ar[d]^{\alpha_{S'}} \\
               \displaystyle\prod_{q\in S'}\mathcal{C}_2(\mathbb{F}_q)\,\; \ar[r]^{\beta_{S'}} &\,\; \displaystyle\prod_{q\in S'}\mathcal{J}_2(\mathbb{F}_q)}
\end{displaymath}            
with the horizontal maps given by the basepoint at infinity and the vertical maps induced by reduction. Assuming we have generators of $\mathcal{J}_2(\QQ)$, we can compute the images of $\alpha_{S'}$ and $\beta_{S'}$ explicitly. Therefore, to rule out the existence of $P\in\mathcal{C}_2(\QQ)$ such that $\pi_{q_0}(P)=\overline{P}_{q_0}$ for some fixed $q_0\in S'$, we just need to check that
\[\beta_{S'}\Big(\{\overline{P}_{q_0}\}\;\,\times \displaystyle\prod_{q\in S'\mysetminus \{q_0\}}\mathcal{C}_2(\mathbb{F}_q)\Big)\;\bigcap\;\alpha_{S'}\Big(\mathcal{J}_2(\QQ)\Big)=\varnothing.\]
On the other hand, for each $q\in S'=\{3,5,7,13\}$, we have seen that any local information obtained from $\mathcal{J}_2(\mathbb{Q})$ can be obtained from $G$, i.e. that $\alpha_{S'}(G)=\alpha_{S'}(\mathcal{J}_2(\mathbb{Q}))$. Moreover, since $G$ is explicitly known to us, we can verify easily with {\tt{Magma}} that 
\[\beta_{S'}\Big(\{\overline{P}_{5}\}\;\,\times \displaystyle\prod_{q\in S'\mysetminus \{5\}}\mathcal{C}_2(\mathbb{F}_q)\Big)\;\bigcap\;\alpha_{S'}\big(G\big)=\varnothing,\;\;\;\;\text{for all}\;\, \overline{P}_{5}=(3,\pm{2}),\, (4,\pm{2}).\]
In particular, there exist no rational points $P\in\mathcal{C}_2(\QQ)$ reducing to $(3,\pm{2})$ or $(4,\pm{2})$ mod $5$. This completes the proof that $\mathcal{C}_2(\mathbb{Q})=\{\infty, (1,\pm{1})\}$ and the proof of Lemma~\ref{lemma:smalln}.     
\end{proof} 
To finish the proof of the Theorem 2, we use the local conditions above (and the fact that the critical orbit tends to end in a fixed point modulo small primes) to prove that~(\ref{refinement}) is impossible for $n\geq4$. We do this in cases: \\
\\
\textbf{Case 1:} If $p\equiv 2\Mod{3}$, then $\phi_p^n(p)\equiv 1\Mod{3}$ for all $n\geq2$. Therefore, if~(\ref{refinement}) holds for some $n\geq 4$, then $1\equiv\phi_p^n(p)\equiv2\cdot y_n^2\equiv 2\Mod{3}$ since $1$ is the only square in $\mathbb{F}_3^*$, and we reach a contradiction.    \\
\\
\textbf{Case 2:} Similarly, if $p\equiv 3\Mod{4}$, then $\phi_p^n(p)\equiv1\Mod{4}$ for all $n\geq1$. Hence, if~(\ref{refinement}) holds for some $n\geq 4$, then $1\equiv\phi_p^n(p)\equiv3\cdot y_n^2\equiv 3\Mod{4}$ since $1$ is the only non-zero square modulo $4$, and we reach a contradiction. \\
\\
\textbf{Case 3:} If $p\equiv 2\Mod{5}$, then $\phi_p^n(p)\equiv 4\Mod{5}$ for all $n\geq2$. Therefore, if~(\ref{refinement}) holds for some $n\geq 4$, then $4\equiv\phi_p^n(p)\equiv2\cdot y_n^2\equiv 2,3\Mod{5}$ since $1$ and $4$ are the only squares in $\mathbb{F}_5^*$. As in the previous cases, we reach a contradiction. \\ 
\\ 
\textbf{Case 4:} If $p\equiv 3,6\Mod{7}$, then $\phi_p^n(p)\equiv 1\Mod{7}$ for all $n\geq3$. Hence, if~(\ref{refinement}) holds for some $n\geq 4$, then we see that $1\equiv\phi_p^n(p)\equiv3\cdot y_n^2\equiv 3,5,6\Mod{7}$ since $\{1,2,4\}=(\mathbb{F}_7^*)^2$, yielding a contradiction.\\ 
\\
\textbf{Case 5:} If $p\equiv 2\Mod{11}$, then $\phi_p^n(p)\equiv 4\Mod{11}$ for all $n\geq2$ and $2$ is not a square in $\mathbb{F}_{11}$. Therefore~(\ref{refinement}) cannot hold for all $n\geq4$. Likewise, If $p\equiv 3\Mod{11}$, then $\phi_p^n(p)\equiv 6\Mod{11}$ for all $n\geq3$ and $6$ is not in the set $3\cdot (\mathbb{F}_{11}^*)^2$. Hence~(\ref{refinement}) cannot hold for any $n\geq4$. Similarly, if $p\equiv 5\Mod{11}$, then $\phi_p^n(p)\equiv 10\Mod{11}$ for all $n\geq3$ and $10$ is not in the set $5\cdot (\mathbb{F}_{11}^*)^2$. We deduce that~(\ref{refinement}) is impossible for all $n\geq4$. Finally, if $p\equiv 7,10\Mod{11}$, then $\phi_p^n(p)\equiv1\Mod{11}$ for all $n\geq3$ and neither $7$ nor $10$ is a square modulo $11$. It follows that that~(\ref{refinement}) cannot hold for all $n\geq4$.\\
\\
\textbf{Case 6:} If $p\equiv 2\Mod{13}$, then $\phi_p^n(p)\equiv 4\Mod{11}$ for all $n\geq2$ and $2$ is not a square modulo $13$. Therefore~(\ref{refinement}) is impossible. Likewise, if $p\equiv 3\Mod{13}$, then $\phi_p^n(p)\equiv 6\Mod{11}$ for all $n\geq4$ and $6$ is not in the set $3\cdot (\mathbb{F}_{13}^*)^2$. Hence~(\ref{refinement}) cannot hold for all $n\geq4$. On the other hand, if $p\equiv 9\Mod{13}$, then the orbit of $p$ enters a $2$-cycle: $\phi_p^n(p)\equiv 6,11\Mod{13}$ for all $n\geq3$. However, neither $6$ nor $11$ is a square modulo $13$, and we deduce that~(\ref{refinement}) is impossible. Finally, if $p\equiv 7,11\Mod{13}$, then $\phi_p^n(p)\equiv 1\Mod{13}$ for all $n\geq4$, and again~(\ref{refinement}) cannot hold for any $n\geq4$.  \\
\\  
\indent On the other hand, sieving through the $669$ primes $p<5000$, we see that only 
\[p=229, 1009, 1093, 1321, 1453, 3169, 3229, 3301, 3529, 4153, 4261, 4621, 4789  \]  
are not captured by any of the congruences above. Nonetheless, we can still show that~(\ref{refinement}) is impossible for all $n\geq4$ for these exceptional primes by working locally\,: for the primes $p=229, 1093, 1453, 3229, 3301, 4261, 4621, 4789$ work (mod $16$), for the primes $p=1009, 3529$ work (mod $19$), for $p=1321$ work (mod $17$), for $p=3169$ work (mod $53$), and finally for $p=4153$ work (mod $31$).             
\end{proof} 
\begin{rmk} Alternatively, it may be possible to use the explicit theory of heights on hyperelliptic genus $3$ Jacobians~\cite{Stoll-Heights} to prove that $G=\mathcal{J}_2(\QQ)$; this would shorten the proof of Lemma~\ref{lemma:smalln}.  
\end{rmk} 

It is likely that the techniques used to establish Theorem~\ref{thm:quad} can be adapted to other families of unicritical polynomials having zero as a strictly preperiodic point. For instance, we have the following example: 
\begin{prop}{\label{prop}} Let $p\geq3$ be an odd prime and let 
\[f_p(x)=(x-p)^2-p^2-1.\]
Then $\Gal_{\QQ}(f_p^3)\cong[C_2]^3$.   
\end{prop}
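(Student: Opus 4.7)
The plan is to mimic the approach used for Theorem~\ref{thm:quad}: iteratively apply Lemma~\ref{maximality} for $n=1,2,3$, showing at each step that $f_p^n(p)$ is not a square in the splitting field $K_{n-1}$ by exhibiting a prime dividing $f_p^n(p)$ to odd multiplicity and not dividing any earlier iterate $f_p^i(p)$ with $1\le i<n$. Irreducibility of the relevant iterates follows inductively from these maximality statements. The first step is to compute the orbit of $0$ under $f_p$: since $f_p(0)=-1$, $f_p(-1)=(-1-p)^2-p^2-1=2p$, and $f_p(2p)=(2p-p)^2-p^2-1=-1$, the forward orbit $\Orb_{f_p}(0)$ equals $\{0,-1,2p\}$ with $\{-1,2p\}$ forming a $2$-cycle, so the only primes dividing a nonzero element of this orbit are $2$ and $p$. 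As in the proof of Theorem~\ref{thm:quad}, if a prime $q$ in the squarefree part of $f_p^n(p)$ also satisfies $q\mid f_p^{n_i}(p)$ for some $1\le n_i<n$, then $0\equiv f_p^n(p)\equiv f_p^{n-n_i}(0)\Mod{q}$, so $q$ divides a forward image of $0$.

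The case $n=1$ is immediate since $f_p(p)=-(p^2+1)<0$ is not a rational square. For $n=2$, a direct expansion gives $f_p^2(p)=p(p^3+2p^2+2p+2)$, whose second factor reduces to $2\Mod{p}$, so $v_p(f_p^2(p))=1$; since $f_p(p)\equiv-1\Mod{p}$, the prime $p$ itself is a primitive prime divisor of odd multiplicity and Lemma~\ref{maximality} gives $H_2$ maximal. For $n=3$, suppose $f_p^3(p)$ has no primitive prime divisor of odd multiplicity; then by the orbit argument every prime $q$ in its squarefree part divides $f_p^{3-n_i}(0)$ for some $n_i\in\{1,2\}$. The case $n_i=2$ gives $q\mid f_p(0)=-1$, which is impossible, and $n_i=1$ gives $q\mid f_p^2(0)=2p$, so $q\in\{2,p\}$. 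Reducing $f_p(x)\equiv x^2-1\Mod{p}$ and iterating the critical orbit yields $f_p^3(p)\equiv-1\Mod{p}$, so $p\nmid f_p^3(p)$; moreover $f_p^3(p)>0$ because $f_p^2(p)-p=p(p+1)(p^2+p+1)$ is much larger than $\sqrt{p^2+1}$ for $p\ge 3$. Hence the squarefree part of $f_p^3(p)$ is either $1$ or $2$, so $f_p^3(p)\in\{y^2,2y^2\}$ for some integer $y$.

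To eliminate this last possibility I would pass to small moduli. Tracking the orbit of $p$ modulo $8$ in each residue class shows $f_p^3(p)\equiv 2\Mod{8}$ when $p\equiv 1,5\Mod{8}$ and $f_p^3(p)\equiv 6\Mod{8}$ when $p\equiv 3,7\Mod{8}$. Since nonzero squares mod $8$ lie in $\{1,4\}$, $f_p^3(p)$ is not a square; since $2y^2\Mod{8}\in\{0,2\}$, the residue $6$ also rules out $f_p^3(p)=2y^2$, handling $p\equiv 3,7\Mod{8}$. For the remaining classes $p\equiv 1,5\Mod{8}$ (necessarily $p>3$), an analogous orbit computation modulo $3$ gives $f_p^3(p)\equiv 1\Mod{3}$ regardless of whether $p\equiv 1$ or $p\equiv 2\Mod{3}$, whereas $2y^2\Mod{3}\in\{0,2\}$. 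This rules out $f_p^3(p)=2y^2$ in every remaining case, so $H_3$ is maximal and $\Gal_{\QQ}(f_p^3)\cong[C_2]^3$.

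I do not expect a serious obstacle: the favorable preperiodic orbit structure of $0$ under $f_p$ (with cycle $\{-1,2p\}$ lying above $\{2,p\}$) is exactly what allows the Theorem~\ref{thm:quad} argument to be transplanted, and the only real content is the routine orbit bookkeeping modulo $p$, $8$, and $3$.
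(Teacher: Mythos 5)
Your proof is correct and it takes a genuinely different — and in fact more elementary — route than the paper's. Both arguments reduce to showing $H_n$ is maximal for $n=1,2,3$ by exhibiting, for each $n$, a primitive prime divisor of $f_p^n(p)$ of odd multiplicity (equivalently, that $f_p^n(p)$ is not a square in $K_{n-1}$), and both exploit the strictly preperiodic orbit of $0$. The divergence is in how levels $2$ and $3$ are handled. The paper, closely mirroring its treatment of Lemma~\ref{lemma:smalln}, reduces the question to ruling out integral points with prime $x$-coordinate on the elliptic curve $X_1: y^2 = x^3+2x^2+2x+2$ and rational points on the genus $3$ hyperelliptic curve $X_2: 2y^2 = x^8 + 4x^7 + 8x^6 + 10x^5 + 8x^4 + 4x^3 - 1$; the first is done by a {\tt Magma} integral-point computation and the second by showing $J(X_2)(\QQ)$ is trivial (rank zero via descent, trivial torsion via Katz). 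You instead observe for $n=2$ that $v_p(f_p^2(p))=1$ while $p\nmid f_p(p)$, so $p$ itself is a primitive odd-multiplicity prime divisor — bypassing $X_1$ altogether and in fact exposing a small inefficiency in the paper, since the assumption that the squarefree part of $f_p^2(p)$ equals $p$ already forces $p$ to be primitive (unlike the situation for $\phi_p$ in Theorem~\ref{thm:quad}, where $p\mid\phi_p(p)$). For $n=3$ you pin down the potential squarefree part to $\{1,2\}$ via the orbit of $0$ and the congruence $f_p^3(p)\equiv -1\Mod{p}$, then eliminate both possibilities by computing $f_p^3(p)\Mod{8}$ and $\Mod{3}$. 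The upshot is a fully hand-computable argument with no appeal to computer algebra or Jacobian arithmetic, at the cost of not obtaining the stronger curve-level data (full $X_1(\ZZ)$ and $X_2(\QQ)$) that the paper records. One small note: the paper states $\Orb_{f_p}(0)=\{-1,-2p\}$, whereas the correct orbit is the $2$-cycle $\{-1,2p\}$ as you compute; the sign is immaterial since only the prime set $\{2,p\}$ enters the argument.
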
 
\begin{proof} It follows from~\cite[Proposition 4.7]{Jones1} that $f_p^n$ is irreducible over $\QQ$ and that $f_p^n(p)$ is not a rational square in $\QQ$ for all $n$. Moreover, we compute that $\Orb_{f_p}(0)=\{-1,-2p\}$, so that~\cite[Theorem 3.3]{Jones1} implies that $\Gal_{\QQ}(f_p^m)\cong [C_2]^m$ unless there exists $2\leq n\leq m$ such that 
\[2^{\epsilon_1}\cdot p^{\epsilon_2}\cdot y_n^2=f_p^n(p);\]
here $\epsilon_i\in\{0,1\}$ and $\epsilon_1\cdot \epsilon_2\neq0$. Moreover, if $n$ is even, then $\epsilon_1=0$ for divisibility reasons. Similarly, if $n$ is odd, then $\epsilon_2=0$. In particular, we must rule out integral points $(p,y_n)$ on the curves 
\[X_1: y^2=x^3+2x^2+2x+2\;\;\;\text{and}\;\;\;\, X_2: 2y^2=x^8+4x^7+8x^6+10x^5+8x^4+4x^3-1\]
to prove the proposition. However, $X_1$ is an elliptic curve and {\tt{Magma}} computes that the only integral points on $X_1$ are $(1,\pm{1})$. Likewise, we compute with {\tt{Magma}} that the Jacobian $J(X_2)$ of $X_2$ has rank-zero, and that $\#JX_2(\mathbb{F}_3)=25$ and $\#JX_2(\mathbb{F}_5)=66$ are coprime. Hence, $J(X_2)(\QQ)$ is the trivial group ~\cite[Appendix]{Katz} and $X_2$ has no rational points.           
\end{proof} 
\begin{rmk} It follows from Proposition~\ref{prop} (and the analysis in its proof), that $G_{\QQ}(f_p)\cong [C_2]^\infty$ for all $p\equiv 2\Mod{5}$. This provides an example of how one might generalize Theorem~\ref{thm:quad}.   
\end{rmk}   


\section{Appendix: Stability and Conjugation}
In this section, we make note of a technique for proving the irreducibility of certain polynomials obtained from Eisenstein polynomials via conjugation.
\begin{lem}{\label{lem:irre}} Let $K/\mathbb{Q}$ be finite. Let $p \in \mathbb{Z}$ be an odd prime and $\nu: K \rightarrow \mathbb{Z} \cup \{ \infty \}$ a normalized exponential valuation above $p$. Suppose $\nu(p) > 1$ and $f(x) = \sum_{i=0}^p c_i x^i \in K[x]$ satisfies the following conditions:
\begin{itemize}
\item[\textup{(i)}] $\nu(c_p) = 0$,
\item[\textup{(ii)}] $\nu(c_i) > 1$ for $1 \leq i \leq p-1$,
\item[\textup{(iii)}] $\nu(c_0) = 1$.
\end{itemize}
Then for all $\alpha \in K$ with $\nu(\alpha) \geq 0$, the polynomial  $f(x + \alpha) - c_p \alpha^p$ is Eisenstein with respect to~$\nu$.
\end{lem}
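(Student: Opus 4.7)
The plan is to expand $g(x) := f(x+\alpha) - c_p \alpha^p$ explicitly via the binomial theorem and check the three Eisenstein conditions directly against hypotheses (i)--(iii), leveraging the fact that $\nu(p) > 1$ is strong enough to absorb the usual ``lossy'' binomial coefficients $\binom{p}{j}$ for $1 \le j \le p-1$.

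Concretely, writing $g(x) = \sum_{j=0}^{p} b_j x^j$, one computes
\[
b_j = \sum_{i=j}^{p} c_i \binom{i}{j} \alpha^{i-j} \quad (1 \le j \le p), \qquad b_0 = \sum_{i=0}^{p-1} c_i \alpha^i,
\]
where the subtracted $c_p \alpha^p$ is precisely what cancels the $i = p$ contribution to the constant term. The leading coefficient is $b_p = c_p$, so $\nu(b_p) = 0$ by (i). For the constant term, hypothesis (iii) gives $\nu(c_0) = 1$, while for $1 \le i \le p-1$ hypothesis (ii) together with $\nu(\alpha) \ge 0$ yields $\nu(c_i \alpha^i) \ge \nu(c_i) > 1$; the strict triangle inequality then forces $\nu(b_0) = \nu(c_0) = 1$, exactly as required. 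For the intermediate coefficients with $1 \le j \le p-1$, split the sum for $b_j$ into the $i < p$ terms and the $i = p$ term: each $i < p$ term satisfies $\nu(c_i \binom{i}{j} \alpha^{i-j}) \ge \nu(c_i) > 1$ by (ii) since $\binom{i}{j} \in \mathbb{Z}$ has nonnegative $\nu$, while the $i = p$ term $c_p \binom{p}{j} \alpha^{p-j}$ satisfies $\nu(c_p \binom{p}{j} \alpha^{p-j}) \ge \nu\!\bigl(\tbinom{p}{j}\bigr) \ge \nu(p) > 1$ using the standard divisibility $p \mid \binom{p}{j}$ for $1 \le j \le p - 1$. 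Hence $\nu(b_j) \ge 1$.

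There is no real obstacle here beyond careful bookkeeping: the only place where the hypothesis $\nu(p) > 1$ (rather than $\nu(p) \ge 1$) is actually used is to secure $\nu\!\bigl(\tbinom{p}{j}\bigr) > 1$, and the only place where the subtraction of $c_p \alpha^p$ matters is in pinning down $\nu(b_0) = 1$ exactly rather than possibly $0$. Combining the three verifications shows that $g(x)$ is Eisenstein with respect to $\nu$, completing the proof.
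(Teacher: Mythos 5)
Your proof is correct and follows essentially the same route as the paper's: expand $f(x+\alpha)$ by the binomial theorem, use $\nu(p)>1$ together with $p\mid\binom{p}{j}$ to control the $i=p$ contribution to the middle coefficients, and use the ultrametric inequality with (iii) to pin down the constant term. The one small difference is that you record $\nu(b_j)\ge 1$ for $1\le j\le p-1$ while your own estimates actually give the strict bound $\nu(b_j)>1$, which the paper explicitly notes so that $f(x+\alpha)-c_p\alpha^p$ again satisfies hypotheses (i)--(iii) and the lemma can be iterated (as it is in Corollary~\ref{cor:eg}).
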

\begin{proof}
When $\alpha = 0$, the expression reduces to the polynomial $f$ which is clearly Eisenstein with respect to $\nu$. In fact, the given conditions on the coefficients are slightly stronger than needed. We now show that the stronger conditions imply the given statement for other choices of $\alpha \in K$ with $\nu(\alpha) \geq 0$,. 

If we write $\displaystyle f(x + \alpha)  = \sum_{j=0}^p b_j x^j$ then $\displaystyle b_j = \sum_{i=j}^p c_i \binom{i}{j} \alpha^{i - j}$ for $0 \leq j \leq p$. From this we see that $\nu(b_p) = \nu(c_p) = 0$, and for $0 \leq j \leq p - 1$ we have
\[    \nu(b_j)  \geq \min_{j \leq i \leq p} \nu \left(c_i \binom{i}{j}\right).
\]
For $1 \leq j \leq p - 1$, we have $\nu(c_p \binom{p}{j}) = \nu(p) > 1$. Combining this with the assumption that $\nu(c_i) > 1$ for $1 \leq i \leq p-1$, we see that $\nu(b_j) > 1$ for $1 \leq j \leq p - 1$. Finally, observe that $\displaystyle b_0 - c_p \alpha^p = c_0 + \sum_{i = 1}^{p-1} c_i \alpha^i$. Since $\nu(c_0) = 1$ and $\displaystyle \nu\left(  \sum_{i = 1}^{p-1} c_i \alpha^i \right) > 1$, it follows that $\nu \left( b_0 - c_p \alpha^p \right) = 1$. 

This shows that the coefficients of $f(x + \alpha) - c_p \alpha^p$ also satisfy the conditions in the statement of the lemma and hence this polynomial is Eisenstein with respect to $\nu$.
\end{proof}

\begin{cor}{\label{cor:eg}} Let $p$ be an odd prime, let $\zeta_p$ be a primitive $p$th root of unity, and let $i$ be an integer in the range $2\leq i\leq p$. Then all of the iterates of the polynomial 
\[\varphi_{(p,i)}(x) =  (x - \zeta_p^i)^p + (1+\zeta_p^i - \zeta_p)\] 
are irreducible over $\mathbb{Q}(\zeta_p)$. Moreover, $\varphi_{(p,i)}(0)=\varphi_{(p,i)}^2(0)=\zeta_p^i-\zeta_p,$ so that zero is strictly preperiodic for $\varphi_{(p,i)}$.  
\end{cor}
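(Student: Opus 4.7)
The plan is to show that every iterate $\varphi_{(p,i)}^n$ is Eisenstein at the unique prime $\mathfrak{p}=(1-\zeta_p)$ of $\mathbb{Z}[\zeta_p]$ lying above $p$, from which irreducibility over $\mathbb{Q}(\zeta_p)$ is immediate. Throughout, let $\nu$ denote the corresponding normalized valuation, so that $\nu(1-\zeta_p)=1$ and $\nu(p)=p-1\geq 2$.

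First I would settle the preperiodicity claim by direct calculation: since $p$ is odd, $(-\zeta_p^i)^p=-1$, giving $\varphi_{(p,i)}(0)=-1+(1+\zeta_p^i-\zeta_p)=\zeta_p^i-\zeta_p$, and then $\varphi_{(p,i)}(\zeta_p^i-\zeta_p)=(-\zeta_p)^p+(1+\zeta_p^i-\zeta_p)=\zeta_p^i-\zeta_p$. The assumption $2\leq i\leq p$ forces $\zeta_p^i\neq \zeta_p$, so $\zeta_p^i-\zeta_p\neq 0$ and $0$ is strictly preperiodic.

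For irreducibility I would prove by induction on $n\geq 1$ the congruence
\[\varphi_{(p,i)}^n(x)\equiv x^{p^n}+(\zeta_p^i-\zeta_p)\pmod{\mathfrak{p}^2}.\]
The base case $n=1$ follows by expanding $(x-\zeta_p^i)^p$: each middle coefficient $\binom{p}{k}(-\zeta_p^i)^{p-k}$ has $\nu\geq p-1\geq 2$, while the $k=0$ term contributes $(-\zeta_p^i)^p=-1$, so combining with $(1+\zeta_p^i-\zeta_p)$ yields $x^p+(\zeta_p^i-\zeta_p)$ modulo $\mathfrak{p}^2$. Equivalently, this is Lemma~\ref{lem:irre} applied to $f(x)=x^p+(\zeta_p^i-\zeta_p)$ with $\alpha=-\zeta_p^i$. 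For the inductive step, the hypothesis gives $\varphi_{(p,i)}^{n-1}(x)-\zeta_p^i\equiv x^{p^{n-1}}-\zeta_p\pmod{\mathfrak{p}^2}$; writing this as $A+B$ with $A=x^{p^{n-1}}-\zeta_p$ and the coefficients of $B$ in $\mathfrak{p}^2$, every mixed term in the expansion $(A+B)^p$ carries a factor $\binom{p}{k}\in\mathfrak{p}^{p-1}$ for some $1\leq k\leq p-1$ together with at least one further factor from $B$ contributing $\mathfrak{p}^2$, hence lies in $\mathfrak{p}^{p+1}\subseteq\mathfrak{p}^2$. So $(\varphi_{(p,i)}^{n-1}(x)-\zeta_p^i)^p\equiv A^p\pmod{\mathfrak{p}^2}$, and the same binomial-coefficient argument applied to $A^p=(x^{p^{n-1}}-\zeta_p)^p$ leaves only the end terms $x^{p^n}+(-\zeta_p)^p=x^{p^n}-1$ modulo $\mathfrak{p}^2$. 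Adding $1+\zeta_p^i-\zeta_p$ closes the induction.

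Finally, since $\zeta_p^i-\zeta_p=\zeta_p(\zeta_p^{i-1}-1)$ and $\zeta_p^{i-1}-1$ generates $\mathfrak{p}$ for $1\leq i-1\leq p-1$, the constant term $\varphi_{(p,i)}^n(0)=\zeta_p^i-\zeta_p$ has $\nu$ exactly $1$; by the congruence, every non-leading, non-constant coefficient has $\nu\geq 2$. Thus $\varphi_{(p,i)}^n$ is Eisenstein at $\mathfrak{p}$ and therefore irreducible over $\mathbb{Q}(\zeta_p)$. I do not anticipate a real obstacle here: the only delicate point is the cross-term bookkeeping in the inductive step, which is comfortably absorbed by the $\mathfrak{p}$-adic weight $\nu\bigl(\binom{p}{k}\bigr)=p-1\geq 2$ of the interior binomials.
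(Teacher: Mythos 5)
Your proof is correct and follows essentially the same approach as the paper: both establish Eisenstein-ness at $\mathfrak{p}=(1-\zeta_p)$ via the bound $\nu\bigl(\binom{p}{k}\bigr)=p-1\geq 2$ on the middle binomials together with the fixed-point identity $\varphi_{(p,i)}^n(0)=\zeta_p^i-\zeta_p$, which pins the constant term's valuation at exactly $1$. Your explicit induction mod $\mathfrak{p}^2$ simply spells out a step the paper dispatches tersely—passing from the Eisenstein property of $\varphi_{(p,i)}$ itself (obtained there via Lemma~\ref{lem:irre} applied to $x^p+(1-\zeta_p)$, followed by the constant shift $-(1-\zeta_p^i)$) to that of all its iterates.
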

\begin{proof}
Let $\nu$ be the valuation on $\mathbb{Q}(\zeta_p)$ above $p$ so that $\nu(1 - \zeta_p) = 1$ and $\nu(p) = p - 1 > 1$. Now we apply Lemma~\ref{lem:irre} to $f(x) = x^p + (1 - \zeta_p)$ and $\alpha = -\zeta_p^i$, so that the polynomial $g(x)=(x-\zeta_p^i)^p+2-\zeta_p$ is Eisenstein at $\nu$. On the other hand, $g(x)-(1-\zeta_p^i)=\varphi_{(p,i)}(x)$; hence, it suffices to show that $\nu(\varphi_{(p,i)})=1$ to deduce that $\varphi_{(p,i)}(x)$ is Eisenstein at $\nu$. To do this, we compute that $\varphi_{(p,i)}(0)=\varphi_{(p,i)}^2(0)=\zeta_p^i-\zeta_p$ and that  \[\zeta_p^{p-i}\cdot\varphi_{(p,i)}(0)=1-\zeta_p^{p-i+1}.\] 
However, $p-i+1\not\equiv0\Mod{p}$, by the assumption $2\leq i\leq p$. Therefore, $(1-\zeta_p^{p-i+1})$ and $(1-\zeta_p)$ generate the same ideal in $\mathbb{Z}[\zeta_p]$, and we deduce that $\nu(\varphi_{(p,i)}^n(0))=1$ for all $n\geq1$. It follows that $\varphi_{(p,i)}^n$ is an Eisenstein polynomial with respect to $\nu$ for all $n\geq1$. 
\end{proof}
\begin{rmk} For $\varphi_{p}$ as in Theorem~\ref{thm:unicrit}, note that $\varphi_{p}=\varphi_{(p,p)}$ and that $\Orb_{\varphi_{(p,i)}}(\zeta_p^i)$ is finite. Therefore, it is likely that Theorem~\ref{thm:unicrit} holds if we replace $\varphi_p$ with $\varphi_{(p,i)}$ for any $2\leq i\leq p$. 
\end{rmk}

\end{document}